\newtheorem*{proposition*}{Proposition}
\newtheorem{theorem}{Theorem}
\newtheorem*{theorem*}{Theorem}
\newtheorem*{lemma*}{Lemma}
\newtheorem*{corollary*}{Corollary}
\def\cit#1#2{\ifx#1!\cite{#2}\else#2\fi} %for citing references
\def\o{\circ\,}
\def\al{\alpha}
\def\be{\beta}
\def\ga{\gamma}
\def\de{\delta}
\def\ep{\varepsilon}
\def\ze{\zeta}
\def\et{\eta}
\def\ka{\kappa}
\def\rh{\rho}
\def\ph{\varphi}
\def\om{\omega}
\def\i{^{-1}}
\def\p{\partial}
\let\on=\operatorname
\def\L{\mathcal L}
\def\AMSonly#1{}
\def\p{\partial}
\let\on=\operatorname
\def\R{\mathbb R}
\def\<{\big\langle}
\def\>{\big\rangle \:}
\def\div{\operatorname{div}}
\begin{document}
\title{On Euler's equation and `EPDiff'}
\author{David Mumford and Peter W. Michor}
\address{
David Mumford:
Division of Applied Mathematics, Brown University,
Box F, Providence, RI 02912, USA}
\email{David\_Mumford@brown.edu}
\address{
Peter W. Michor:
Fakult\"at f\"ur Mathematik, Universit\"at Wien,
Oskar-Morgenstern-Platz 1, A-1090 Wien, Austria}
\email{Peter.Michor@univie.ac.at}
\begin{abstract}
We study a family of approximations to Euler's equation depending on two parameters 
$\ep,\et \ge 0$. When $\ep=\et=0$ we have Euler's equation and when both are positive we have 
instances of the class of integro-differential equations called EPDiff in imaging science. These 
are all geodesic equations on either the full diffeomorphism group 
$\on{Diff}_{H^\infty}(\mathbb R^n)$ or, if $\ep = 0$, its volume preserving subgroup. They are 
defined by the right invariant metric induced by the norm on vector fields given by      
$$ \|v\|_{\ep,\et} = \int_{\mathbb R^n} \langle L_{\ep,\et} v, v \rangle\, dx $$ 
where $L_{\ep,\et} = (I-\tfrac{\et^2}{p} \triangle)^p \circ (I-\tfrac1{\ep^2} \nabla \circ \div)$. 
All geodesic equations are locally well-posed, and the $L_{\ep,\et}$-equation admits solutions for 
all time if $\et>0$ and $p\ge (n+3)/2$. We tie together solutions of all these equations by  
estimates which, however, are only local in time. This approach leads to a new notion of momentum 
which is transported by the flow and serves as a generalization of vorticity. We also discuss how 
delta distribution momenta lead to``vortex-solitons", also called ``landmarks" in imaging science, 
and to new numeric approximations to fluids.     
\end{abstract}
\keywords{diffeomorphism groups, Euler's equation, EPDiff, vortex-solitons, vortons}
\subjclass[2010]{Primary 35Q31, 58B20, 58D05} 

%\date{\today}
\maketitle

In Arnold's famous 1966 paper \cite{Arnold66}, he showed that Euler's equation in $\R^n$ for 
incompressible, non-viscous flow was identical to the geodesic equation on the group of volume 
preserving diffeomorphisms for the right invariant $L^2$-metric. This raises the question, what are 
the equations for geodesic flow on the full group of diffeomorphisms in various right invariant 
metrics? Arnold also gave the general recipe for writing down these equations but, as far as we 
know, geodesics of this sort were not specifically studied beyond the 1-dimensional case, until 
Miller and Grenander and co-workers introduced them into medical imaging applications. In 1993 they 
laid out a program for comparing individual medical scans with standard human body templates 
\cite{MillerEtAl93}. Subsequently they introduced a large class of right-invariant metrics on the 
group of (suitably smooth) diffeomorphisms using norms on vector fields given by: 
$$ \| v\|_L^2 = \int_{\R^n} \langle L v,  v \rangle dx.$$
Here $L$ is a positive definite self-adjoint differential operator. They proposed to measure the 
distance from the subject scan to the template by the length of the $L$-geodesic connecting them 
(see their survey article \cite{MillerEtAl02}). The geodesic equation for these metrics are 
integro-differential equations called EPDiff (or `Euler-Arnold' equations). In this paper we want 
to study the relationship of Euler's equation to EPDiff.    

To be specific, we shall use in this paper the group $\on{Diff}_{H^\infty}(\mathbb R^n)$ of all 
diffeomorphisms $\ph$ of the form $\ph(x) = x + f(x)$ with $f$ in the intersection $H^\infty$ of 
all Sobolev spaces $H^s$, $s\ge0$, and also its normal subgroup 
$\on{Diff}_{\mathcal S}(\mathbb R^n)$ where $f$ is in the space $\mathcal S$ of all rapidly decreasing 
functions. See \cite{MM12} for Lie group structures on them.    

Note that, in the $H^\infty$ case, $f$, along with its derivatives, will approach 0 as 
$\|x\|\rightarrow \infty$, but not necessarily at any fixed rate. The geodesic equation in these 
metrics is similar in form to fluid flow equations except that it involves a {\it momentum} $ m( x, 
t) = L v( x,t)$, called momentum because it is dual to velocity in the sense that the `energy' can 
be expressed as $\| v\|^2_L = \int \langle  v,  m\rangle dx$.

The geodesic equation EPDiff of interest in this paper is this:
\begin{equation}\label{eq:EPDiff}
\boxed{\p_t  m = -( v \cdot \nabla) m - \div( v) m -  m \cdot (D v)^t}
\end{equation}
In coordinates, we can write the right hand side more explicitly as:
$ -\sum_j (v_j \p_{x_j}m_i + \p_{x_j}v_j\cdot m_i + m_j \p_{x_i} v_j)$. Note that $ v$ can be 
recovered from $ m$ as $ v = K \ast  m$ where $K$ is the (matrix-valued) Green's function for the 
operator $L$, that is, its inverse in the space $\mathcal S$.   

The rather complicated expression for the rate of change of momentum -- that is the force -- has a 
simple meaning. Namely, let the vector field $ v$ integrate to a flow $\ph$ via 
$$ \p_t \ph(x, t) = v(\ph(x,t),t)$$
and describe the momentum by a {\it measure-valued 1-form} 
$$
\widetilde{m} = \sum_i m_i dx_i\otimes (dx_1\wedge \cdots \wedge dx_n)
$$
so that $\|v\|_L^2 = \int ( v, \widetilde m )$ makes intrinsic sense. 
Then it's not hard to check that equation \thetag{\ref{eq:EPDiff}} is equivalent to: 
$\widetilde m$ is {\it invariant} under the flow $\ph$, that is,
$$\widetilde{m}(\cdot,t) = \ph(\cdot, t)_* \widetilde{m}(\cdot,0),$$
whose infinitesimal version is the following, using the Lie derivative (see \cite[3.4]{M127}),
\begin{equation}
\p_t \widetilde m(\cdot,t) = - \mathcal L_{ v(\cdot,t)}\widetilde m(\cdot,t).\label{eq:mtilde}
\end{equation}
Because of this invariance, if a geodesic begins with momentum of compact support, it will always 
have compact support; and if it begins with momentum which, along with all its derivatives, has 
`rapid' decay at infinity, that is it is in $O(\|x\|^{-n})$ for every $n$, this too will persist. 
This comes from the lemma:   

\begin{lemma*} {\rm \cite{MM12}}  If $\ph \in \on{Diff}_{H^\infty}(\R^n)$ and $T$ is any smooth 
tensor on $\R^n$ with rapid decay at infinity, then $\ph_*(T)$ is again smooth with rapid decay at 
infinity.  
\end{lemma*}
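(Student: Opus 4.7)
My plan is to reduce the claim to two simple facts: first, that $\ph$ and $\ph\i$ both differ from the identity by an $H^\infty$ function with all derivatives bounded and vanishing at infinity; and second, that the pushforward $\ph_*T$ is, in coordinates, a finite sum of products of derivatives of $\ph$ (or $\ph\i$) composed with $\ph\i$, times $T\circ\ph\i$ and its derivatives.

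First I would invoke the result from \cite{MM12} that $\on{Diff}_{H^\infty}(\R^n)$ is a group, so that $\ph\i\in \on{Diff}_{H^\infty}(\R^n)$; write $\ph(x)=x+f(x)$ and $\ph\i(y)=y+g(y)$ with $f,g\in H^\infty$. By Sobolev embedding in the $H^s$-scale, $f,g\in C^\infty\cap C_0$ and every partial derivative $\p^\alpha f,\p^\alpha g$ is bounded on $\R^n$ and tends to $0$ as $\|x\|\to\infty$. In particular there is a constant $C$ with $\|g(y)\|\le C$ for all $y$, so for $\|y\|\ge 2C$ we have
\[
\|\ph\i(y)\|\ge \|y\|-\|g(y)\|\ge \tfrac12\|y\|.
\]
Hence rapid decay of $T$ at $\ph\i(y)$ transfers to rapid decay at $y$: for every $N$,
\[
|T(\ph\i(y))|=O(\|\ph\i(y)\|^{-N})=O(\|y\|^{-N}) \quad \text{as }\|y\|\to\infty,
\]
and the same estimate holds with $T$ replaced by any partial derivative $\p^\alpha T$.

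Next I would compute $\ph_*T$ in coordinates. Whatever the tensor type, each component of $\ph_*T$ at $y$ is a polynomial in the entries of $D\ph(\ph\i(y))$ and $D\ph\i(y)=(D\ph)\i(\ph\i(y))$, multiplied by the corresponding component of $T$ at $\ph\i(y)$. Since $D\ph=I+Df$ with $Df$ and all its derivatives bounded, and since $\det D\ph$ is bounded away from $0$ on $\R^n$ (this is part of the definition/characterisation of $\on{Diff}_{H^\infty}$ in \cite{MM12}), the entries of $D\ph$ and $(D\ph)\i$ are smooth and bounded with all derivatives bounded. Thus each component of $\ph_*T$ is a bounded smooth factor times a rapidly decaying function of $y$, hence is itself rapidly decaying.

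Finally I would handle the derivatives of $\ph_*T$ by the Faà di Bruno / Leibniz formula. Differentiating any component of $\ph_*T$ produces a finite sum of products of (i) derivatives of entries of $D\ph\circ\ph\i$ or of $D\ph\i$, all of which are bounded on $\R^n$ by the preceding paragraph applied to higher-order derivatives of $f$ and $g$, and (ii) derivatives of $T$ evaluated at $\ph\i(y)$, each of which decays rapidly in $y$ by the transfer argument above. Summing finitely many such terms preserves rapid decay, and this gives the desired bound for every partial derivative of every component of $\ph_*T$, completing the proof. The only non-routine ingredient is that $\ph\i\in\on{Diff}_{H^\infty}$ and has uniformly bounded Jacobian inverse, which is exactly what is supplied by \cite{MM12}; once that is in hand, the rest is just the chain rule together with the elementary comparison $\|\ph\i(y)\|\asymp\|y\|$ at infinity.
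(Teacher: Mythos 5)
The paper does not prove this lemma---it is quoted from \cite{MM12}---so there is no in-paper argument to compare against; your proof stands on its own and it is correct, being essentially the standard argument (and the one underlying the cited reference). Closure of $\on{Diff}_{H^\infty}(\R^n)$ under inversion plus the Sobolev embedding $H^\infty\subset\bigcap_k C^k_0$ give boundedness and decay of all derivatives of $\ph-\on{Id}$ and $\ph\i-\on{Id}$, hence the comparison $\|\ph\i(y)\|\ge\tfrac12\|y\|$ for large $\|y\|$, after which the chain rule together with Leibniz and Fa\`a di Bruno reduces every derivative of every component of $\ph_*T$ to a finite sum of (bounded) times (rapidly decaying) terms. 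The only ingredients you import---that $\ph\i\in\on{Diff}_{H^\infty}(\R^n)$ and that $\det D\ph$ is bounded away from zero (the latter in fact follows from $Df\to0$ at infinity, $\det D\ph>0$, and continuity, so it need not be taken as part of the definition)---are exactly what \cite{MM12} supplies.
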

Moreover this invariance gives us a Lagrangian form of EPDiff:
\begin{multline} \p_t \ph(x,t) = \int K^{\ph(\cdot,t)}(x,y) \large(\ph(y,t)_*\widetilde{m}(y,0)\large)
= K^{\ph(\cdot,t)} \ast (\ph(\cdot,t)_*\widetilde m(\cdot,0)) \\ 
\text{ where } K^\ph(x,y)=K(\ph(x),\ph(y)) \label{eq:lagrange}
\end{multline}

The main result of this paper is that solutions of Euler's equation are limits of solutions of 
equations in the EPDiff class with the operator: 
\begin{equation}
L_{\ep,\et} = (I-\tfrac{\et^2}{p}\triangle)^p \circ (I-\tfrac{1}{\ep^2} \nabla\circ \div), 
\qquad \text{for any } \ep > 0, \et \ge 0.\label{eq:epet}
\end{equation} 
We will show that all solutions of Euler's equation are limits of solutions of these much more 
regular EPDiff equations and {\it give a bound on their rate of convergence}. In fact, so long as 
$p>n/2+1$, Trouv\'e and Younes have shown \cite{TrouveYounes} that these EPDiff equations have a 
well-posed initial value problem with unique solutions for all time. Combining our result with 
theirs gives a new way of approximating solutions of Euler's equation by solutions of a more 
regular equation. Moreover, although $L_{0,\et}$ does not make sense, the analog of its Green's 
function $K_{0,\et}$ does make sense as do the equations (\ref{eq:EPDiff}), (\ref{eq:mtilde}). 
These are, in fact, geodesic equations on the group of volume preserving diffeomorphisms SDiff and 
become Euler's equation for $\et=0$. An important point is that so long as $\et >0$, the equations 
have `particle' solutions in which the momentum is a sum of delta functions.         

Our approach is closely related to several strands of work reformulating Euler's equation in a 
Hamiltonian setting. The first goes back to P.\ H.\ Roberts' 1972 paper \cite{Roberts} on weakly 
interacting vortex rings in $\R^3$  where a finite dimensional Hamiltonian system for a finite set 
of such rings is introduced (his equations (35) and (36)). In 1988 Oseledets\cite{Oseledets} gave a 
completely general Hamiltonian reformulation of Euler's equation. He introduced the dual momentum 
variables $ m(x,t)$ described above, called $\ga(x,t)$ in his paper, that have non-zero divergence 
in general. With a suitable Hamiltonian, he recovers Euler's equation as a Hamiltonian system. He 
notes that when the momenta are sums of delta functions, one recovers Roberts' system. 
Subsequently, in a second direction, Alexandre Chorin and his students Thomas Buttke, Ricardo 
Cortez and Michael Minion developed the discrete approximation by vortex rings as an effective way 
to solve Euler's equation numerically in line with the general ``Smoothed Particle Hydrodynamics" 
(SPH) technique (see \cite{Buttke,Cortez}). Chorin discusses this technique in \S 1.4 of his book 
\cite{Chorin} where he calls the momentum variables `magnetization'. Finally there is a third 
strand connected to our work. A key point in EPDiff is the use of operators $L$ of the form 
$(I-\triangle)^p$ which have the effect of smoothing the velocities $ v$ that solve the equation. 
The case $p=1$ arose earlier from the study of the Camassa-Holm equation \cite{CamHolm}, also 
called the $\al$-Euler equation for incompressible flows in dimension bigger than one. The CH 
equation is very explicitly related to EPDiff in Holm and Marsden's 2003 paper \cite{HolmMars}, 
which strongly motivated the present paper.                    

The main point here is that all this work, in both the discrete and continuous cases, fits in 
logically as special cases of the general EPDiff setup and thus as geodesic equations on the group 
of diffeomorphisms with Riemannian metrics depending on two auxiliary parameters. Besides these 
formal connections we give what we believe are new existence theorems for certain cases of EPDiff 
that, as stated above, lead to explicit bounds on the convergence of the particle methods to 
solutions of Euler's equation. In the last section, we show that Roberts' dynamics of vortex rings 
is the same as our geodesic dynamics when the momentum is a sum of delta distributions. In this 
context, it is interesting that this dynamical system generates in many cases higher order 
singularities in the infinite time limit and we illustrate these.        

The authors wish to thank Alexandre Chorin, Constantine Dafermos, Darryl Holm and Andr\'e Nachbin 
for very helpful conversations and references.

\section{Oseledets's form of Euler's equation} 
Oseledets' Hamiltonian formulation of Euler's equation states that for a suitable kernel $K$, 
Euler's equation becomes equation (\ref{eq:mtilde}) above. To describe his result, consider how 
EPDiff might be related to Euler. First of all, it's natural to take $K$ to be the identity matrix 
times a delta function $\de_0(x)$ because then $\| v\|^2_L$ is just the kinetic energy 
$\int | v(x)|^2dx$ where $|\cdot|$ denotes also the Euclidean norm in $\mathbb R^n$. 
Then $ v = m$, and EPDiff becomes:
$$ \p_t  v = - ( v \cdot \nabla)  v - \div( v)  v - \nabla (\tfrac12 | v|^2)$$
which looks like Euler's equation if the divergence of $ v$ can be made to be zero for all time and 
the last term can be interpreted as the gradient of pressure. But how do we keep the divergence of 
$ v$ zero? In fact, the right link between Euler and EPDiff is a little more subtle and requires 
the ansatz:   
$$ K_{ij}(x)= \de_{ij} \de_0(x) + \p_{x_i} \p_{x_j} H$$
with the Hessian of an auxiliary function $H$. With this form of $K$, we get:
$$ v_i-m_i = \sum_j (\p_{x_i}\p_{x_j} H)\ast m_j = \p_{x_i}\left( H \ast \sum_j \p_{x_j} m_j \right)$$
or
$$  v =  m + \nabla(H \ast \div( m)).$$
Substituting this into EPDiff and assuming $\div( v)=0$ we again get Euler's equation:
\begin{align*}
0 &= \p_t  m + ( v \cdot \nabla) m + \div( v) m +  m \cdot (D v)^t \\
&= \p_t  v - \nabla(\p_t (H \ast \div( m))) + ( v \cdot \nabla) v - ( v \cdot \nabla)(\nabla (H \ast \div( m)) +\\
&\qquad + \nabla(\tfrac12 | v|^2) - \nabla(H\ast \div( m))\cdot (D v)^t\\
&= \p_t  v + ( v \cdot \nabla) v + \nabla p\\
\text{with the pressure}&\\
p &= -\p_t (H \ast \div( m)) + \tfrac12 | v|^2 - ( v \cdot \nabla)(H \ast \div( m)).
\end{align*}
But now we can also guarantee that the divergence of $ v$ is zero if we choose $H$ correctly. We have:
$$\div( v) = \div( m) + \triangle(H\ast\div( m))= \div\left((\de_0 + \triangle H)\ast  m\right) $$
so all we need to do is to take $H$ to be the Green's function of minus the Laplacian and, at least 
formally, we get Euler's equation. But $K$ now has a rather substantial pole at the origin. In 
fact, if $V_n$ is the area of an $(n-1)$-sphere, then:  
\begin{align*}
H( x) &= \left\{\begin{array}{ll} 
\frac{1}{(n-2)V_n} (1/| x|^{n-2}) & \text{if } n>2, \\
\frac{1}{V_2} \log(1/| x|) & \text{if } n=2 
\end{array}\right.\\
\text{so that, {\it as a function}}& \\
\p_{x_i}\p_{x_j}H(x) &= \frac{1}{V_n} \cdot \dfrac{nx_i x_j - \de_{ij} | x|^2}{| x|^{n+2}}, \quad \text{if }x \ne 0.
\end{align*}
Letting $M_0$ denote this matrix-valued function, note that convolution with any of its elements 
$(M_0)_{ij}$ is still a  
Calderon-Zygmund singular integral operator defined by the limit as $\ep\rightarrow 0$ of its value 
outside an $\ep$-ball, so it is reasonably well behaved.  
{\it As a distribution} there is another term; compare with 
\cite[Thms. 3.2.4 and 3.3.2]{Hoermander83I}.
It is not hard to check that:
$$ 
\p_{x_i}\p_{x_j}H \stackrel{\text{distribution}}{=} (M_0)_{ij} - \frac{1}{n} \de_{ij} \de_0 
$$
Now convolution among distributions is associative and commutative so we have
$$ m + \p^2(H)_{\text{distr}} \ast m = m + \nabla H \ast \div(m)$$
which is the identity if $\div(m)=0$ and has values with $\div=0$, i.e., 
is a projection onto the subspace of divergence-free vector fields. 
As it is self-adjoint we see that
$$ m\mapsto  v =  \left(m + \p^2(H)_{\text{distr}}\right) = \left(\tfrac{n-1}{n}\cdot m + M_0 \ast  m\right)$$ 
is the \emph{orthogonal projection} $P_{\div=0}$ of the space of vector fields $ m$ 
onto the subspace of divergence free vector fields $v$. 
This is the vector form of the Hodge decomposition of 1-forms and is bounded orthogonal in each Sobolev space. 
The matrix given by the value of $M_0$ at each point $x \in \R^n$ has $\R x$ as an eigenspace with 
eigenvalue $(n-1)/V_n|x|^n$ and $\R x^\perp$ as an eigenspace with eigenvalue $-1/V_n|x|^n$. 
%So if we let $u_x$ be the unit vector in direction $x$ and $u_x^\perp$ be its orthogonal complement, 
So if we let $P_{\mathbb Rx}$ and $P_{\mathbb Rx^\bot}$ be the orthonormal projections onto the 
eigenspaces,
then we have the useful formula:
\begin{multline*} 
P_{\div=0}(m)(x) =
\\
 \tfrac{n-1}{n}\cdot m(x) + \frac{1}{V_n}\cdot \lim_{\ep \rightarrow 0}\int_{|y|\ge \ep} \frac{1}{|y|^n} 
 \left( (n-1) P_{\mathbb Ry}(m(x-y)) - P_{\mathbb Ry^\perp}(m(x-y)\right)dy.
 \end{multline*}

With this choice of $K$, EPDiff in the variables $( v,  m)$ becomes the Euler equation in $ v$ 
with pressure given by an explicit function of $ m$ and $ v$. 
This gives us {\it Oseledets's form for Euler's equation}:
\begin{equation}\boxed{\begin{aligned}
v &= P_{\div=0}( m)\\
\p_t  m &= -( v \cdot \nabla) m -  m \cdot (D v)^t 
%\quad \text{or}\\ \p_t \widetilde{m} &= -\mathcal{L}_{ v} (\widetilde{m})
\end{aligned}}\label{eq:E1} \end{equation}
Let $\widetilde{m} = \sum_i m_i dx_i$ be the 1-form associated to the vector field $m$.
Since $v$ is divergence free we can use $\widetilde m$ instead of 
$\sum_i m_i dx_i \otimes dx_1\wedge\dots dx_n$. In integrated form, we have:
\begin{equation}\boxed{\begin{aligned}
\p_t \ph &=  P_{\div=0}( m) \circ \ph \\
\widetilde{m}(\cdot,t) &= \ph(\cdot, t)_* \widetilde{m}(\cdot,0)
\end{aligned}}\label{eq:E2}\end{equation}
This form of Euler's equation uses the variables $ v, m$ instead of the traditional $ v, p$ 
(velocity and pressure) but it has the great advantage that $ m$, like vorticity, is constant when 
suitably transported by the flow. In fact, $ m$ determines the vorticity, defined in arbitrary 
dimensions as the 2-form $\om = d(\sum_i v_i dx_i)$. This is because $ v$ and $ m$ differ by a 
gradient, so $\om = d\widetilde{m}$ also. Thus the fact that vorticity is constant along flows is a 
consequence of the same fact for the momentum 1-form $\widetilde{m}$. This way of writing the 
velocity field as a convolution with a momentum field means we write the velocity field as a 
superposition of the elementary vector fields $P_{\div = 0}( m_0 \de_{x_0})$ for all points $x_0$ 
and vectors $ m_0$. In dimension 2, $x_0=(0,0),  m_0 = (1,0)$, this is the harmonic vector field $ 
v = \left(\tfrac{x^2-y^2}{| x|^4}, \tfrac{2xy}{| x|^4}\right)$  with a singularity at 0 where it 
has a dipole as vorticity. In dimension 3, this vector field is an infinitesimal vortex ring which 
is how Roberts' paper \cite{Roberts} connects to our paper. 

One of the motivations for this formulation of Euler's equation is that if $v(x,0)$ is any initial 
condition for velocity, we take any momentum $m(x,0)$ such that $v(\cdot,0) = P_{\div = 
0}(m(\cdot,0))$. As Chorin has pointed out, in many situations one can start with $m(\cdot,0)$ of 
compact support and then $m$ will remain of compact support even though $v$ will have heavy tails 
due to the effects of pressure far from the support of $m$. This seems to be one of the reasons why 
his numerical vortex dipole/vortex ring technique works so well.     

\section{Approximating Euler with EPDiff}

However, the above equations (\ref{eq:E1}) and (\ref{eq:E2}) are not part of the true EPDiff framework 
because the operator $K = P_{\div = 0}$ is not invertible and there is no corresponding 
differential operator $L$. 
In fact, $ v$ does not determine $ m$ as we have rewritten Euler's equation using extra non-unique 
variables $m$,  albeit ones which obey a conservation law so they 
may be viewed simply as extra parameters. 
The simplest way to perturb this $K$ to make it invertible is to replace the above Green's function $H$ 
of the Laplacian by the Green's function 
$H_\ep$ of the positive definite operator $\ep^2 I - \triangle$ 
for some constant $\ep>0$ (whose dimension is $\text{length}^{-1}$). 
The Green's function may be given explicitly using the `K' Bessel function via the formula 
$$H_\ep( x) = c_n \ep^{n-2} |\ep x|^{1-n/2}K_{n/2-1}(|\ep x|)$$ 
for a suitable constant $c_n$ independent of $\ep$ (see \cite{AbramowitzStegun}).
 
Then we get the modified kernel 
$$(K_\ep)_{ij} = \de_{ij}\de_0 + (\p_{x_i}\p_{x_j}H_\ep)_{\text{distr}}$$  
This has exactly the same highest order pole at the origin as $K$ did and the second derivative is 
again a Calderon-Zygmund singular integral operator minus the same delta function. The main 
difference is that this kernel has exponential decay at infinity, not polynomial decay. By 
weakening the requirement that the velocity be divergence free, the resulting integro-differential 
equation behaves much more locally, more like a hyperbolic equation rather than a parabolic one.     

Note that here $K_\ep$ scales as $K_\ep( x) = \ep^n K_1(\ep x)$ and that, as $\ep$ goes to zero, 
the limit of $K_\ep$ (as an operator on $\mathcal S$, say) is just our previous kernel $K$.  
Taking the Fourier transform and inverting, we can find the corresponding operator $L_\ep$ in 
several steps: 
\begin{align*}
 \widehat{H_\ep} &= \frac{1}{\ep^2+|\xi|^2}, \quad \text{hence} \\
 \widehat{\p_{x_i}\p_{x_j} H_\ep} &= -\frac{\xi_i \xi_j}{\ep^2+|\xi|^2}, \quad \text{hence}\\
 \widehat{(K_\ep)_{ij}} &= \frac{(\ep^2 +|\xi|^2)\de_{ij}-\xi_i\xi_j}{\ep^2 + |\xi|^2} 
\end{align*}
Now the inverse of this {\it as a matrix} is the remarkably simple:
$$\left( \frac{(\ep^2 +|\xi|^2)\de_{ij}-\xi_i\xi_j}{\ep^2 + |\xi|^2} \right)^{-1} = \de_{ij} + \frac{\xi_i \xi_j}{\ep^2}$$
and this comes from the differential operator:
$$L_\ep = I - \tfrac{1}{\ep^2} \nabla \o \div$$
Thus we have inverse operators as required by the EPDiff setup:
$$  v = K_\ep \ast  m, \quad  m = L_\ep( v).$$ 
Finally this operator $L_\ep$ defines the simple metric:
$$ \| v\|^2_{L_\ep} = \int_{\R^n}\!\! \<  v, L_\ep( v)\> dx_1\wedge \cdots \wedge dx_n 
= \int_{\R^n}\!\! \left(|v( x)|^2 + \tfrac{1}{\ep^2} \div(v)( x)^2 \right)dx_1\wedge \cdots \wedge dx_n.$$

As in Arnold's original paper, formally at least, solutions of EPDiff for this $K_\ep, L_\ep$ are 
geodesics in the group of diffeomorphisms for this metric. EPDiff is the geodesic equation with 
momentum and velocity but in this case it simplifies to a form involving only velocity that closely 
resembles Euler's equation. Substituting the formula for $L_\ep$, we calculate as follows:   
\begin{align*}
\p_t(v_i) &= (K_\ep)_{ij} \ast \p_t(m_j) \\
&= -(K_\ep)_{ij} \ast \left( (v_k m_{j,k} + m_j\cdot \div( v) + m_k \cdot v_{k,j} \right) \\
&= -(K_\ep)_{ij} \ast \big( (v_k v_{j,k} + v_j \div( v) + v_k \cdot v_{k,j}) \\
& \qquad\qquad - \tfrac{1}{\ep^2}(v_k \div( v)_{,jk} + \div( v)_{,j} \div( v) + \div( v)_{,k} v_{k,j} )\big)\\
&= -(K_\ep)_{ij}\! \ast \!\Big(v_k v_{j,k} + \tfrac12\left(| v(x)|^2 
- (\tfrac{\div( v)}{\ep})^2\right)_{,j} + v_j \div( v) -\tfrac{1}{\ep^2} (v_k \div( v)_{,k})_{,j} \Big )\\
&= -(K_\ep)_{ij} \ast \Big(v_k v_{j,k} + \tfrac12\left(| v(x)|^2 + (\tfrac{\div( v)}{\ep})^2\right)_{,j} 
+ (L_\ep)_{jk}(v_k \div( v))  \Big )\\
&= -(K_\ep)_{ij} \ast(v_k v_{j,k}) -v_i \div( v)-\tfrac12 (K_\ep)_{ij}\ast \left(| v(x)|^2+(\tfrac{\div(v)}{\ep})^2\right)_{,j}
\end{align*}
Here we have written $| v(x)|$ in order to make clear that we are taking the norm 
of the single vector $ v(x)$, not the norm of the whole vector field, so $| v(x)|$ is a function on $\R^n$. 
Now we also have the identity:
$$ (K_\ep \ast \nabla f)_i = f_{,i} + \sum_j \p_i \p_j H_\ep \ast f_{,j} 
= f_{,i} + \triangle H_\ep \ast f_{,i}= \ep^2 \p_{x_i} H_\ep \ast f $$
so the final geodesic equation is:
\begin{equation}\boxed{\p_t( v)=-(K_\ep) \ast (( v \cdot \nabla)  v) 
- v\cdot \div( v)-\tfrac{\ep^2}{2} \nabla  H_\ep \ast \left(| v(x)|^2+(\tfrac{\div(v)}{\ep})^2 \right)}
\label{eqn:epsEuler}\end{equation}

This is certainly the simplest choice for a metric which allows non-zero divergence but, as 
$\ep \rightarrow 0$, seeks to make the divergence smaller and smaller so that, in the limit, the  
divergence must be identically zero and we have the $L^2$ metric on the group of volume preserving 
diffeomorphisms. At the same time, the above equation approaches Euler's equation. We will show 
below that solutions of the above equation must approach solutions of Euler's equation and, when 
the momentum has rapid decay at infinity, we will give an explicit bound on the rate of 
convergence. Curiously though, the parameter $\ep$ can be scaled away. That is, if 
$ v( x, t),  m( x, t)$ is a solution of EPDiff for the kernel $K_1$, then $ v(\ep  x, \ep t),  
m(\ep  x, \ep t)$ is  a solution of EPDiff for $K_\ep$. 

The above case of EPDiff still has a singular kernel $K_\ep$ for which existence theorems are 
difficult (see below). The cases of EPDiff which have been analyzed and used in medical imaging 
applications \cite{MillerEtAl02, TrouveYounes, Younes10} involve kernels which are $C^1$. We can 
easily make our singular example a limit of better behaved examples. The simplest way is to compose 
the above operator $L_\ep$ with  a scaled version of the standard regularizing kernel 
$(I-\triangle)^p$ giving the positive definite self-adjoint differential operator given above 
(equation (\ref{eq:epet}) of the Introduction):      
$$ L_{\ep,\et} = (I-\tfrac{\et^2}{p} \triangle)^p \circ (I-\tfrac1{\ep^2} \nabla \circ \div)$$
Here the constant $\et$ has dimension $\text{length}$ and although $\ep$ and $\et$ could be scaled 
away by themselves, the composite kernel has a dimensionless parameter $\et\cdot\ep$. Since 
$L_{\ep,\et}$ is a composition, so is its inverse and hence the kernel is now the convolution: 
$$ K_{\ep,\et} = G_\et^{(p)} \ast K_\ep$$
where $G_\et^{(p)}$ is the Green's function of $(I-\tfrac{\et^2}{p} \triangle)^p$ and is again 
given explicitly by a `K'-Bessel function $d_{p,n}\et^{-n}| x|^{p-n/2}K_{p-n/2}(| x|/\et)$. The 
reason for inserting $p$ in the denominator of the coefficient is that for $p\gg 0$, the kernel 
converges to a Gaussian with variance depending only on $\et$, namely 
$(2\sqrt{\pi}\et)^{-n}e^{-| x|^2/4\et^2}$.  
This follows because the Fourier transform takes $G_\et^{(p)}$ to 
$\left(1+\tfrac{\et^2|\xi|^2}{p}\right)^{-p}$, whose limit, as $p \rightarrow \infty$, is 
$e^{-\et^2|\xi|^2}$.  These approximately Gaussian kernels lie in $C^q$ if $q \le p-(n+1)/2$. So 
long as the kernel is in $C^1$, it is known that EPDiff has solutions for all time 
\cite{TrouveYounes}. A particularly simple case is when $p=(n+3)/2$. Then the Green's function is 
just a constant times the $C^2$ function $(1+|x|/\et)e^{-|x|/\et}$ as you can verify by taking 
$n=1$ and checking that that this is the Green's function of $1-\et^2 (d/dx)^2$.          
\begin{figure}[h!]
\epsfig{file=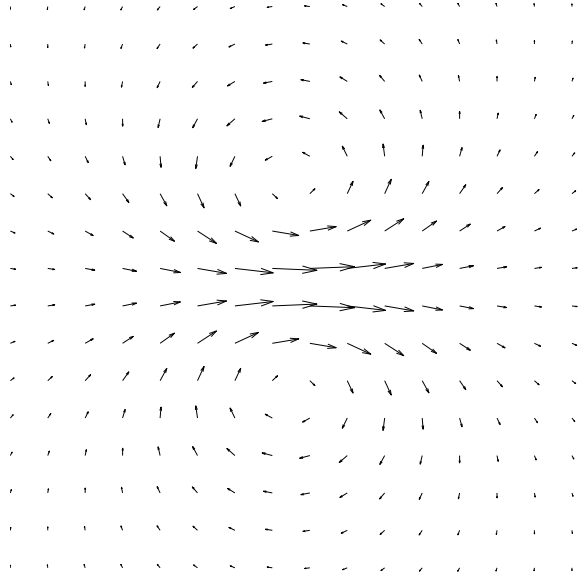,width=3in}
\caption{The dipole given by the kernel $K_{0,\et}$ in dimension 2. }\label{fig:Kplot}
\end{figure}

Finally we may also consider the limiting case $\ep=0, \et>0$. In this case $v = G_\et^{(p)} 
\ast P_{\div=0}(m)$ so $v$ has divergence zero. There is no $L$ because $v$ determines $m$ only up 
to a gradient field. However EPDiff in Oseledets's form form (\ref{eq:E1}) makes perfect sense. 
Like Euler's equation it gives geodesics on the group of volume preserving diffeomorphisms. As 
always, the energy is $E=\int v\cdot m$ and this is conserved on geodesics. Even though we have no 
$L$, we can rewrite the energy using     
$ (I-\frac{\et^2}{p}\triangle)^p v = P_{\div=0}(m)$, giving us: 
$$ E=\int v \cdot m = \int v \cdot P_{\div = 0} (m) = 
\int v\cdot (I-\tfrac{\et^2}{p}\triangle)^pv.$$ 
Alternatively, we may use the above energy to define a metric on the group  of volume 
preserving diffeomorphisms which differ from the identity by a mapping in $H^\infty$ (or 
$\mathcal S$), and our equation is just the geodesic equation on $\on{SDiff}_{H^\infty}(\mathbb R^n)$ for this metric. 
Note that the Lie algebra of $\on{SDiff}_{H^\infty}(\mathbb R^n)$ is just the space of vector 
fields $v$ in $H^\infty$ with divergence zero and 
its dual is the space of 1-forms in $H^\infty$ {\it modulo} closed 1-forms, which is the same as the space of 
exact 2-forms in $H^\infty$. In the divergence free setting there is no need to consider 1-forms tensored with 
the standard  density $dx$.
On $\on{SDiff}_{H^\infty}(\mathbb R^n)$, 
the `momentum' associated to a geodesic, defined as the dual of the 
tangent to the geodesic, is the classical vorticity of the flow and, in our formulation, we have 
simply `lifted' this to the dual Lie algebra of $\on{Diff}_{H^\infty}(\mathbb R^n)$. 
%In a similar we can use the normal subgroup $\on{SDiff}_{\mathcal S}(\mathbb R^n)$.       

The case $p=1$ has been introduced and studied by Holm and collaborators (see \cite{HMR}, equation 
(8.29)) who use the letter $\al$ for our $\et$ and call EPDiff the $\al$-Euler equation: 
\begin{align*} 
(1-\al^2\triangle)(\p_t  v) &= -( v \cdot \nabla)(1-\al^2\triangle) v - (1-\al^2\triangle) v \cdot (D v)^t + \nabla p,\\
\div ( v)&=0.\end{align*}
You can also drop incompressibility and when $n=1$ this becomes the Camassa-Holm equation \cite{CamHolm}.

The $K$ for the $\ep=0, \et>0$ metric is just the convolution $G^{(p)}_\et \ast P_{\div = 0}$. This 
$K$ can be explicitly calculated using the fact that the Green's function $H$ is harmonic. We use: 
\begin{theorem} Let $F(x)=f(|x|)$ be any integrable $C^2$ radial function on $\R^n$. Assume $n \ge 3$. Define:
$$ H_F(x) = \int_{\R^n} \!\!\!\!\min\left(\tfrac{1}{|x|^{n-2}}, \tfrac{1}{|y|^{n-2}}\right) F(y) dy 
= \frac{1}{|x|^{n-2}}\int_{|y|\le|x|}\!\!\!\! F(y)dy + \int_{|y| \ge |x|}\!\! \frac{F(y)}{|y|^{n-2}} dy$$
Then $H_F$ is the convolution of $F$ with $\frac{1}{|x|^{n-2}}$, is in $C^4$ and:
\begin{align*}
\p_i (H_F)(x) &= -(n-2)\frac{x_i}{|x|^n}\int_{|y|\le|x|}\!\!\!\! F(y)dy \\
\p_i \p_j (H_F)(x) &= (n-2)\left(\frac{n x_i x_j - \de_{ij}|x|^2}{|x|^{n+2}} \int_{|y|\le|x|}\!\!\!\! F(y)dy 
- V_n \frac{x_i x_j}{|x|^2} F(x)\right)
\end{align*}
If $n=2$, the same holds if you replace $1/|x|^{n-2}$ by $\log(1/|x|)$ and omit the factors $(n-2)$ in the derivatives.
\end{theorem}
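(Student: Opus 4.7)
The plan is to identify the kernel $\min(1/|x|^{n-2},1/|y|^{n-2})$ as the spherical average of the Newton kernel $1/|x-y|^{n-2}$; this is the classical shell theorem and is the only conceptual input. Everything else is direct computation.

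First I would establish the shell identity
$$\frac{1}{V_n r^{n-1}}\int_{|x|=r}\frac{dS(x)}{|x-y|^{n-2}} \;=\; \min\!\Bigl(\tfrac{1}{r^{n-2}},\tfrac{1}{|y|^{n-2}}\Bigr),\qquad |y|\neq r.$$
Denoting the left-hand side by $w(y)$, differentiation under the integral shows that $w$ is radial in $y$ and harmonic on each of $\{|y|<r\}$ and $\{|y|>r\}$, since $1/|x-y|^{n-2}$ is harmonic in $y$ off the diagonal. Radial harmonic functions on such a region have the form $\alpha+\beta/|y|^{n-2}$; matching $w\to 0$ at infinity, $w$ finite at $0$, and $w$ continuous across $|y|=r$ pins down $w(y)=1/r^{n-2}$ for $|y|<r$ and $w(y)=1/|y|^{n-2}$ for $|y|>r$.

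Second, since $F$ is radial so is $H_F$. Replacing $H_F(x)$ by its spherical mean over the sphere of radius $|x|$, swapping orders of integration, and inserting the shell identity yields
$$\int_{\R^n}\frac{F(y)}{|x-y|^{n-2}}\,dy \;=\; \int_{\R^n}F(y)\min\!\Bigl(\tfrac{1}{|x|^{n-2}},\tfrac{1}{|y|^{n-2}}\Bigr)dy \;=\; H_F(x),$$
which is the convolution claim. Splitting the $\min$ according to whether $|y|$ is below or above $|x|$ gives the second explicit form $H_F(x)=A(|x|)/|x|^{n-2}+B(|x|)$, with $A(r)=\int_{|y|\le r}F(y)\,dy$ and $B(r)=\int_{|y|\ge r}F(y)/|y|^{n-2}\,dy$.

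Third, I differentiate this explicit form. Using $A'(r)=V_n r^{n-1}f(r)$ and $B'(r)=-V_n r\,f(r)$, the shell contributions from differentiating $A(r)/r^{n-2}$ and $B(r)$ cancel exactly, leaving the radial derivative $h'(r)=-(n-2)A(r)/r^{n-1}$. The chain rule then produces $\partial_i H_F=-(n-2)x_i A(|x|)/|x|^n$, and a second differentiation, using $\partial_j(x_i/|x|^n)=(\delta_{ij}|x|^2-n x_i x_j)/|x|^{n+2}$ together with $\partial_j A(|x|)=V_n x_j |x|^{n-2} F(x)$, yields the stated formula for $\partial_i\partial_j H_F$. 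For the $C^4$ regularity, $A\in C^3$ (one order smoother than $F$), and the apparent $|x|^{-(n+2)}$ singularity is tamed by the vanishing $A(r)=O(r^n)$ at the origin together with the traceless matrix $nx_ix_j-\delta_{ij}|x|^2$; a short Taylor expansion confirms that the formulas extend continuously through $0$ and admit two further derivatives.

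The main obstacle is really only bookkeeping: justifying differentiation under the integral at infinity and verifying that the apparent singularity at $x=0$ is removable (which boils down to the direction-independent limit $-(n-2)V_n f(0)\delta_{ij}/n$ at the origin). The $n=2$ case is structurally identical after replacing $1/|x|^{n-2}$ by $\log(1/|x|)$; the shell identity becomes $\operatorname{avg}_{|x|=r}\log(1/|x-y|)=\min(\log(1/r),\log(1/|y|))$ by the same harmonicity/matching argument, and the same cancellation of $A'$ and $B'$ terms gives the $(n-2)$-free derivative formulas.
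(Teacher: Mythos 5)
Your proposal is correct, and the derivative computations (the cancellation of the $A'(r)r^{-(n-2)}$ and $B'(r)$ terms, the formula $\p_j(x_i/|x|^n)=(\de_{ij}|x|^2-nx_ix_j)/|x|^{n+2}$, and the Taylor-expansion check that the traceless structure and $A(r)=O(r^n)$ make the origin a removable singularity) coincide with what the paper does. Where you genuinely diverge is in how the convolution identity $H_F=F\ast|x|^{-(n-2)}$ is established. The paper goes the other way around: it first verifies the derivative formulas directly from the explicit form of $H_F$, takes the trace to get $\triangle H_F=-(n-2)V_nF$, and then identifies $H_F$ with the convolution because $1/((n-2)V_n|x|^{n-2})$ is the Green's function of $-\triangle$ — an argument that silently relies on a Liouville-type uniqueness step (two decaying solutions of the same Poisson equation agree). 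You instead prove Newton's shell theorem, i.e.\ that the spherical average of $|x-y|^{-(n-2)}$ over $|x|=r$ equals $\min(r^{-(n-2)},|y|^{-(n-2)})$, and obtain the convolution identity directly by Fubini, using that the convolution of two radial functions is radial and hence equals its own spherical mean. Your route is more self-contained (no appeal to uniqueness for the Laplacian) at the cost of one extra lemma; the paper's is shorter once the derivative formulas are in hand. One small point to tighten: in your harmonicity-and-matching proof of the shell identity, the conditions ``$w\to 0$ at infinity, finite at $0$, continuous across $|y|=r$'' determine $w$ only up to the overall constant $\beta$ in $w=\beta/|y|^{n-2}$ on the outer region; you need the additional (easy) observation that $|y|^{n-2}w(y)\to 1$ as $|y|\to\infty$, since $|x-y|\sim|y|$ uniformly for $|x|=r$. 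With that sentence added, the argument is complete, and the $n=2$ case goes through verbatim with $\log(1/|x|)$ as you say.
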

\begin{proof} The idea is to first check that $H_F \in C^2$ with the above expressions for its 
first and second derivatives. This is straightforward when $x \ne 0$. Near $0$, let 
$F(x) = a+b|x|^2 + o(|x|^2)$. Then one checks that:  
$$H_F(x) = \left(\int\frac{F(y)}{|y|^{n-2}} dy \right) - \frac{(n-2)aV_n}{2n}|x|^2 - \frac{(n-2)bV_n}{4(n+2)}|x|^4 + o(|x|^4)$$
hence the expressions for the first and second derivatives extend across the origin. Taking the 
trace of the matrix of second derivatives, one finds that; 
$\triangle H_F = -(n-2)V_n F.$
Since the Green's function of $-\triangle$ is $1/(n-2)V_n|x|^{n-2}$, this implies that $H_F$ is the 
convolution $F\ast(1/|x|^{n-2})$. 
\end{proof}

This applies to $F=G^{(p)}_\et$ for example, or to the limiting case where $F$ is a Gaussian, giving 
the following expression for the kernel $K_{0,\et}$ for finite $p$ or the limiting Gaussian case: 
\begin{equation} K_{0,\et}(x) = \frac{\de_{ij} |x|^2 - x_i x_j}{|x|^2} G^{(p)}_\et(x) + \frac{x_i x_j 
- \de_{ij}|x|^2/n}{|x|^2}\text{Mean}_{B_{|x|}}\left(G^{(p)}_\et\right) 
\label{eq:explicitconv} \end{equation}
where $B_a$ is the ball of radius $a$ centered at the origin.

We can summarize all possibilities in a handy table (we have changed notation slightly to use 
double subscripts $\ep, \et$ for all cases): 
$$\boxed{\begin{array}{ll} 
\text{no } L & K_{0,0}=P_{\div =0}=\de_{ij}\de_0 + (\p_i \p_j H)_{\text{distr}}\\
\text{no }L & K_{0,\et}=G^{(p)}_\et \ast P_{\div = 0} \text{ -- see above}\\
L_{\ep,0} = I-\tfrac{1}{\ep^2} \nabla \circ \div & K_{\ep,0} = \de_{ij}\de_0 + \p_i\p_j H_\ep \\
L_{\ep,\et}=\left(I-\tfrac{\et^2}{p} \triangle\right)^p \circ 
\left(I-\tfrac{1}{\ep^2} \nabla \circ \div\right) &K_{\ep,\et} = \de_{ij} G^{(p)}_\et + \p_i\p_j(G^{(p)}_\et \ast H_\ep)
\end{array}}$$

\section{Existence theorems for the $L_{\ep,\et}$ metric}

It is well known that local solutions of Euler's equation itself, that is $L_{0,0}$, exist, e.g. 
see \cite{Kato,Taylor}.  Moreover global solutions of the EPDiff equations $L_{\ep,\et}, \ep, \et > 
0, p \ge (n+3)/2$ have been shown to exist by Trouv\'e and Younes (unpublished but apparently 
implicit in the results of \cite{TrouveYounes} for geodesics in what they call `metamorphosis'). 
Their result extends easily to the EPDiff equations $L_{0,\et}$ because the kernel $K_{0,\et}$ is 
still $C^1$, which holds so long as $p \ge (n+3)/2$. The method here is based on the Lagrangian 
form (\ref{eq:lagrange}) of EPDiff. For completeness, we include the proof:

\begin{theorem}\label{th:exist0} 
Let $\ep\ge0, \et > 0, p \ge (n+3)/2$ and $K = K_{\ep,\et}$ be the corresponding kernel. For any 
vector-valued distribution $m_0$ whose components are finite signed measures, consider the 
Lagrangian equation for a time varying $C^1$-diffeomorphism $\ph(\cdot, t)$ with $\ph(x,0)\equiv x$:  
$$ \p_t \ph(x,t) = \int K(\ph(x,t)-\ph(y,t)) (D\ph(y,t))^{-1,\top} m_0(y)dy.$$
Here $D\ph$ is the spatial derivative of $\ph$. This equation has a unique solution for all time $t$.
\end{theorem}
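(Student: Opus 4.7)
The plan is to treat the Lagrangian equation as an ODE on an open subset of a Banach space of diffeomorphisms, get short-time existence by a Picard/Banach fixed-point argument, then use conservation of the energy $\|v\|^2_L$ together with the kernel regularity forced by $p\ge(n+3)/2$ to rule out finite-time blow-up.

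First I would set $\varphi(x,t)=x+f(x,t)$ and work in the affine space $\mathrm{id}+X$ where $X$ is a Banach space of $C^1$-bounded perturbations (say $C^1_b(\R^n,\R^n)$, or a Sobolev space $H^s$ with $s$ large enough to embed in $C^1$). The admissible set $\mathcal U\subset\mathrm{id}+X$ consists of $\varphi$ with $\|D\varphi-I\|_\infty$ and $\|(D\varphi)^{-1}-I\|_\infty$ bounded. On $\mathcal U$ define
\begin{equation*}
F(\varphi)(x)\;=\;\int K\bigl(\varphi(x)-\varphi(y)\bigr)\,(D\varphi(y))^{-1,\top}m_0(y)\,dy.
\end{equation*}
Since $p\ge(n+3)/2$ the kernel $K=K_{\ep,\et}$ is $C^1$ with bounded derivatives and decay at infinity (the convolution with $G^{(p)}_\et$ handles both $\ep>0$ and $\ep=0$), and $m_0$ is a finite vector measure, so the integral is absolutely convergent and $F(\varphi)\in C^1_b$. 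A direct estimate shows that $\varphi\mapsto F(\varphi)$ is locally Lipschitz on $\mathcal U$: the kernel contributes $\|DK\|_\infty$ times $\|\varphi_1-\varphi_2\|_\infty$, and the inverse-transpose Jacobian is smooth in $D\varphi$ on $\mathcal U$.

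Local existence and uniqueness then follow from the Banach fixed point theorem for ODEs in Banach spaces (Picard--Lindel\"of), giving a maximal solution on some interval $[0,T^*)$. To push this to $T^*=\infty$ I would use conservation of energy along geodesics. For $\ep>0$, $\|v(\cdot,t)\|^2_{L_{\ep,\et}}$ is constant, and since $L_{\ep,\et}$ has order $2p+2\ge n+5$ the induced norm dominates the $H^{p+1}$-norm of $v$, hence via Sobolev embedding controls $\|v\|_{C^1_b}$ uniformly in time; for $\ep=0$ one uses instead the equivalent conserved energy $\int v\cdot(I-\tfrac{\et^2}{p}\triangle)^p v$ restricted to divergence-free fields and the formula $v=G^{(p)}_\et*P_{\div=0}(m)$, obtaining the same $C^1$-control. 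From the equation $\partial_t(D\varphi)=(Dv)(\varphi)\cdot D\varphi$ and Gronwall, $\|D\varphi(\cdot,t)\|_\infty$ and $\|(D\varphi(\cdot,t))^{-1}\|_\infty$ grow at most exponentially in $t$, so $\varphi(\cdot,t)$ stays in $\mathcal U$ on any compact time interval.

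The main obstacle is the combination of two facts: $m_0$ is only a measure (so we cannot differentiate it), and for $\ep>0$ the kernel $K_{\ep,0}$ is only Calder\'on--Zygmund rather than $C^1$ at the origin. The restriction $p\ge(n+3)/2$ is exactly what upgrades $K_{\ep,\et}=G^{(p)}_\et*K_{\ep,0}$ to a genuinely $C^1$ kernel after convolution with the Bessel-type smoother, so that both the local Lipschitz estimate for $F$ and the Sobolev embedding $\|v\|_{C^1}\lesssim\|v\|_{L_{\ep,\et}}$ go through. Once these two ingredients are in place, the standard continuation argument (no blow-up of $\|D\varphi\|_\infty$ or $\|(D\varphi)^{-1}\|_\infty$ in finite time) closes the argument and yields the global unique solution.
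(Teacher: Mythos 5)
Your proposal follows essentially the same route as the paper: both treat the Lagrangian equation as a flow of a locally Lipschitz vector field on an open subset of $\mathrm{id}+C^1_b(\R^n,\R^n)$ (the paper's sets $U_c$ play the role of your $\mathcal U$), then rule out blow-up by combining conservation of the $L_{\ep,\et}$-energy, the Sobolev embedding into $C^1$ guaranteed by $p\ge(n+3)/2$, and a Gronwall bound from $\p_t D\ph = DV_\ph\cdot D\ph$ to keep $D\ph$ and $\det D\ph$ under control on finite time intervals. The argument is correct and matches the paper's proof in all essential steps.
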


\begin{proof} The Eulerian velocity at $\ph$ is: 
$$V_\ph(x) = \int K(x-\ph(y)) (D\ph(y))^{-1,\top} m_0(y)dy$$
and $W_\ph(x) = V_\ph(\ph(x))$ is the velocity in `material' coordinates. 
Note that because of our assumption on $m_0$, if $\ph$ is a $C^1$-diffeomorphism, 
then $V_\ph$ and $W_\ph$ are $C^1$ vector fields on $\R^n$; in fact, they are as differentiable as 
$K$ is, for suitably decaying $m$. 
The equation can be viewed as a the flow equation for the vector field $\ph \mapsto W_\ph$ 
on the union of the open sets 
$$
U_c = \big\{ \ph \in %\on{Diff}^{C^1}(\mathbb R^n)\subset 
C^1(\R^n)^n: \|\on{Id}-\ph\|_{C^1}< 1/c,\; \det(D\ph) > c\big\},
$$
where $c>0$. The union of all $U_c$ is the group $\on{Diff}_{C^1_b}(\mathbb R^n)$ of all 
$C^1$-diffeomorphisms which, together with their inverses, differ from the 
identity by a function in $C^1(\mathbb R^n)^n$ with bounded $C^1$-norm.
We claim this vector field is locally Lipschitz on each $U_c$:
$$ \| W_{\ph_1} - W_{\ph_2}\|_{C^1} \le C.\|\ph_1 - \ph_2\|_{C^1}$$
where $C$ depends only on $c$. 
This is easy to verify using the fact that $K$ is uniformly continuous and using 
$\|D\ph\i\|\le \|D\ph\|^{n-1}/|\det(D\ph)|$. 
As a result we can integrate the vector field for short times in $\on{Diff}_{C^1_b}(\mathbb R^n)$. 
But since $(D\ph(y,t))^{-1,\top} m_0(y)$ is then again a signed finite $\mathbb R^n$-valued measure,  
$$ \int V_{\ph(\cdot,t)}(x) (D\ph(y,t))^{-1,\top} m_0(y) dx = \| V_{\ph(\cdot,t)} \|_{L_{\ep,\et}}$$
is actually finite for each $t$.
%
%To see that the solution remains bounded, note that the initial energy:
%$$ \int V_{\text{Id}}(x) m_0(x) dx = \| V_{\ph(\cdot,0)} \|_{L_{\ep,\et}}$$
%is finite. 
Using the fact that in EPDiff the $L_{\ep,\et}$-energy $\|V_{\ph(\cdot,t)}\|_{L_{\ep,\et}}$ of the 
$L_{\ep,\et}$-geodesic is constant in $t$, 
we get a bound on the norm  $\| V_{\ph(\cdot,t)}\|_{H^p}$, depending of course on $\et$ but 
independent of $t$, hence a bound on $\| V_{\ph(\cdot,t)}\|_{C^1}$. Thus $\|\ph(\cdot,t)\|_{C^0}$ 
grows at most linearly in $t$. But $\p_t D\ph = DW_\ph = DV_\ph \cdot D\ph$ 
which shows us that $D\ph$ grows at most exponentially in $t$. 
Hence $\det D\ph$ can shrink at worst exponentially towards zero, because 
$\p_t \det (D\ph) = \on{Tr}(\on{Adj}(D\ph).\p_t D\ph)$.
Thus for all finite $t$, the solution $\ph(\cdot,t)$ 
stays in a bounded subset of our Banach space and the ODE can continue to be solved.
\end{proof}

For $L_{\ep,0}$ with $\ep > 0$ we proved in a previous paper \cite{MM05} 
that the $L_{\ep,0}$-metric defined a well behaved Riemannian metric 
on the group of diffeomorphisms of $\R^n$ in that the infimum of path lengths 
joining two distinct diffeomorphisms was positive. Here we prove that for all $\ep$ and $\et$, 
including $\ep=0$ and/or $\et=0$, geodesics exist locally -- though as in the Euler case, as far as 
we know, they might become singular in finite time hence not be indefinitely prolongable --  and 
that {\it these local solutions behave continuously in the parameters $\ep, \et$}. In particular, 
as $\ep, \et \rightarrow 0$ they approach solutions of Euler's equation.     

Everything depends on proving a Sobolev estimate for the time derivative of certain energies. We 
need the following straightforward lemma: 

\begin{lemma*} If $\et\ge0$ and $\ep>0$ are bounded above, then the norm 
$$\|v\|_{k,\ep,\et}^2 = \sum_{|\al| \le k}\int \langle D^\al  L_{\ep,\et}v, D^\al v\rangle dx$$
is bounded above and below by the metric, with constants  independent of $\ep$ and $\et$:
$$
\|v\|_{H^k}^2 + \tfrac{1}{\ep^2}\|\mathrm{div}(v)\|_{H^k}^2 
+ \sum_{k+1 \le |\al|\le k+p} \et^{2(|\al|-k)}\int |D^\al v |^2 + \tfrac{1}{\ep^2}|D^\al\mathrm{div}(v)|^2
$$
\end{lemma*}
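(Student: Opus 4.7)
The plan is to pass to the Fourier side, where $L_{\ep,\et}$ becomes multiplication by a matrix symbol and the integrations by parts implicit in $\int \langle D^\al L_{\ep,\et} v, D^\al v\rangle\,dx$ collapse to algebra. Since $L_{\ep,\et}$ has constant coefficients and is self-adjoint, it commutes with every $D^\al$; and the symbol of $I-\ep^{-2}\nabla\o\div$ is the matrix $I+\ep^{-2}\xi\xi^{\top}$, whose quadratic form on $\hat v(\xi)$ equals $|\hat v|^2 + \ep^{-2}|\xi\cdot\hat v|^2$. By Plancherel,
$$
\|v\|_{k,\ep,\et}^2 \;=\; \int_{\R^n}\Big(\sum_{|\al|\le k}\xi^{2\al}\Big)\Big(1+\tfrac{\et^2|\xi|^2}{p}\Big)^p\Big(|\hat v|^2 + \tfrac{1}{\ep^2}|\xi\cdot\hat v|^2\Big)\,d\xi,
$$
while the norm on the right of the claimed equivalence is the integral of the \emph{same} factor $|\hat v|^2+\ep^{-2}|\xi\cdot\hat v|^2$ against the weight $\sum_{|\be|\le k}\xi^{2\be} + \sum_{k+1\le|\al|\le k+p}\et^{2(|\al|-k)}\xi^{2\al}$.

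In particular the vector/matrix piece cancels out of both sides, and with it all dependence on $\ep$. Using the elementary equivalences $\sum_{|\al|\le m}\xi^{2\al}\asymp(1+|\xi|^2)^m$ and $\sum_{|\al|=m}\xi^{2\al}\asymp|\xi|^{2m}$, with constants depending only on $m,n$, the lemma reduces to the scalar comparison
$$
(1+|\xi|^2)^k\Big(1+\tfrac{\et^2|\xi|^2}{p}\Big)^p \;\asymp\; (1+|\xi|^2)^k + \sum_{j=1}^p \et^{2j}|\xi|^{2(k+j)},
$$
with constants depending only on $k,p,n$ and on a uniform upper bound $\et_0$ for $\et$.

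To prove this, expand $(1+\et^2|\xi|^2/p)^p = \sum_{j=0}^p \binom{p}{j}(\et^2/p)^j |\xi|^{2j}$, so that the LHS equals $\sum_{j=0}^p c_j(p)\,\et^{2j}|\xi|^{2j}(1+|\xi|^2)^k$. The lower bound is immediate: the $j=0$ summand is $(1+|\xi|^2)^k$, and for $j\ge 1$ one uses $(1+|\xi|^2)^k\ge |\xi|^{2k}$ to get $\et^{2j}|\xi|^{2j}(1+|\xi|^2)^k\ge \et^{2j}|\xi|^{2(k+j)}$, so the LHS dominates each term on the right and hence (up to the factor $p{+}1$) their sum. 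For the upper bound I would use the splitting
$$|\xi|^{2j}(1+|\xi|^2)^k \;\le\; 2^k\bigl((1+|\xi|^2)^k + |\xi|^{2(k+j)}\bigr),$$
obtained by separating the regions $|\xi|\le 1$ and $|\xi|\ge 1$; the first piece, weighted by $\et^{2j}\le \et_0^{2j}$, is then absorbed into the $(1+|\xi|^2)^k$-term on the right. No single step is subtle -- the whole argument is bookkeeping -- and the only thing that requires care is checking that the constants are uniform in $\ep$ and in small $\et$, which is automatic because each $\et$-dependent term on the left carries the same weight $\et^{2j}$ as the matching term on the right.
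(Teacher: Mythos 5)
Your proof is correct and is essentially the argument the paper has in mind: the paper disposes of this lemma with the single remark that it is ``obvious by expanding $L_{\ep,\et}$'', and your Plancherel computation is exactly that expansion carried out on the Fourier side, where the factorization of the symbol, the cancellation of the common factor $|\hat v|^2+\ep^{-2}|\xi\cdot\hat v|^2$, and the elementary scalar weight comparison make the uniformity in $\ep$ and (bounded) $\et$ transparent. Nothing further is needed.
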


Here $H^k$ is the $k^{th}$ order Sobolev norm for the standard metric, and $D^\al$ is the partial 
derivative for the multiindex $\al$. We also often omit $dx$ at the end of integrals, and 
corresponding brackets. The proof of the lemma is obvious by expanding $L_{\ep,\et}$.

Assuming $k$ is sufficiently large, for instance $k \ge (n+2p+4)$ works, we now prove the main estimate:
$$ \boxed{ |\p_t \left(\| v\|^2_{k,\ep,\et}\right)| \le C .\| v\|_{k,\ep,\et}^3}$$
where, so long $\ep$ and $\et$ are bounded above, the constant $C$ is independent of $\ep$ and $\et$. 

Write $M_\et = (I-\tfrac{\et^2}{p} \triangle)^p$, 
so that $m_i = M_\et v_i - \tfrac1{\ep^2} M_\et\div(v)_{,i}$. 
Using EPDiff and integration by parts, the time derivative is given by:
\begin{align*} 
\tfrac12 \p_t \left(\| v\|^2_{k,\ep,\et}\right) &= \sum_{|\al|\le k} \int_{\R^n} (\p_t D^\al m, D^\al v) \\
 &=\sum_{i,j,|\al| \le k} \int_{\R^n} (-1)^{|\al|+1}
 D^{2\al} v_i.\left( v_j.m_{i,j} + m_i.v_{j,j} + m_j .v_{j,i}\right)
\end{align*}
Next replace the $m_i$ by $M_\et v_i-\tfrac{1}{\ep^2} M_\et \div(v)_{,i}$. 
Integrating the third term by parts to move the $i^{th}$ derivative of $v_j$ to the other factors 
and noting that the two terms involving the second derivative of $\div\, v$ cancel, 
one checks that the estimate can be reduced to 6 terms all of the form 
$\int D^{2\al}f \cdot g \cdot M_\et h$ with one of the triples:
\begin{align*} (f,g,h) = &(v_i,v_j,v_{i,j}),\hspace*{-.3in} &&(v_i, v_j, v_{j,i}), \hspace*{-.3in} &&(v_i, \div\, v,v_i),\\ 
&(\tfrac{\div\, v}{\ep}, v_j,\tfrac{\div\, v_{,j}}{\ep}),\hspace*{-.3in} &&(v_i, \tfrac{\div\, v}{\ep}, \tfrac{\div\, v_{,i}}{\ep}),\hspace*{-.3in} &&(\div\, v,v_j, v_j)  .\end{align*}
%\begin{align*} &\int D^{2\al}v_i\cdot v_j\cdot M_\et v_{i,j}, \quad
%\int D^{2\al}v_i\cdot v_j\cdot M_\et v_{j,i}, \quad
%\int D^{2\al}v_i\cdot \div(v)\cdot M_\et v_i, \\
% & \ep^{-2}\int D^{2\al} v_i\cdot \mathrm{div}(v) \cdot M_\et \div(v)_{,i},
%\quad \int D^{2\al} \div(v) \cdot v_j\cdot M_\et v_j,\\
%&\ep^{-2}\int D^{2\al} \div(v) \cdot v_j \cdot M_\et\div(v)_{,j}
%\end{align*}
Next we expand $M_\et$ to $\sum_{|\be|\le p} (-1)^{|\be|}\frac{\et^{2|\be|}}{p^{|\be|}} D^{2\be}$ 
(omitting binomial constants)
and integrate by parts some more, moving a $D^\al$ from the first to second or third terms and a 
$D^\be$ from the third to first or second terms, getting terms  
$$
\frac{\et^{2|\be|}}{p^{|\be|}}\int D^{\al+\be_1}f\cdot D^{\al_2+\be_2}g \cdot D^{\be+\al_1}h
$$
where $\al = \al_1+\al_2, \be = \be_1+\be_2$. 
Now either $\al_1$ or $\al_2$ is less than or equal to $k/2$ so that the corresponding (second or  
third) term in the integrand has order at most $k/2+p+1$, hence $\le k-n/2-1$. 
Thus by the Sobolev inequalities, its sup norm is bounded by its $k^{th}$ Sobolev norm and we have:
\begin{multline*}
\frac{\et^{2|\be|}}{p^{|\be|}} \Big |  \int D^{\al+\be_1}f \cdot D^{\al_2+\be_2}g \cdot D^{\be+\al_1} h\Big | \le 
\\
\le \frac1{p^{|\be|}}
\left\{ \begin{array}{ll}
\|\et^{|\be|}D^{\al+\be_1}f\|_{L^2}\cdot \|g\|_{H^k} \cdot \|\et^{|\be|} D^{\be+\al_1}h\|_{L^2}, \qquad &\text{if } |\al_2| \le k/2, \\
\|\et^{|\be|}D^{\al+\be_1}f\|_{L^2} \cdot \|\et^{|\be|} D^{\al_2+\be_2}g\|_{L^2} \cdot \|h\|_{H^k},
\qquad &\text{if } |\al_1| \le k/2. \end{array} \right.
\end{multline*}
The first term is always bounded by $\|f\|_{k,\ep,\et}$ and so is the other $D$-term {\it except} 
in the first case with $\al_2 = 0, |\al|=k, |\be|=p $ and $h$ is a first derivative 
$\tilde h_{,\ell}$ with $\tilde h$ either a component of $v$ or $\div\, v/\ep$. In this last case, 
the third term has $k+p+1$ derivatives, so the lemma does not apply. But we can still integrate by 
parts, putting the $\ell^{\text{th}}$ derivative on the other terms. If $|\be_2| > 0$ or if 
$f=v_\ell$, this reduces again to terms bounded by the $(k,\ep,\et)$ norm. The only remaining case 
is when $f=\tilde h$, and then we have:     
$$ \int D^{\al+\be}f \cdot g \cdot D^{\al+\be}f_{,\ell} = \tfrac12 \int (D^{\al+\be}f)^2_{,\ell} \cdot g 
= -\tfrac12 \int (D^{\al+\be}f)^2 \cdot g_{,\ell}$$ 
and this finishes the proof of the estimate.

Using this estimate, we can prove:

\begin{theorem}\label{th:exist} 
Fix $k,p,n$ with $p > n/2+1, k\ge n+2p+4$ and assume 
$(\ep,\et) \in [0,M]^2$ for some $M>0$. Then there are constants $t_0, C$ 
such that for all initial conditions $v_0 \in H^{k+p+1}$, there is a unique solution 
$v_{\ep,\et}( x,t)$ of the above case of EPDiff 
(including the limiting case of Euler's equation) for $t\in [0,t_0]$. 
The solution $v_{\ep,\et}(\cdot,t)\in H^{k+p+1}$ depends continuously on $\ep,\et \in [0,M]^2$ 
and satisfies $\|v_{\ep,\et}(\cdot,t)\|_{k,\ep,\et} < C$ for all $t \in [0,t_0]$. 
\end{theorem}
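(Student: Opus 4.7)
The plan is to use the main a priori estimate $|\partial_t \|v\|^2_{k,\ep,\et}| \le C \|v\|^3_{k,\ep,\et}$ as the single engine driving existence, uniqueness, and continuous dependence; all remaining work is wrapping standard techniques around it. Setting $y(t) = \|v(\cdot,t)\|_{k,\ep,\et}$, the estimate gives the Riccati inequality $|y'| \le (C/2) y^2$, which integrates to $y(t) \le y(0)/(1 - (C/2) y(0) t)$. Choosing $t_0 = 1/(C y(0))$ produces the uniform bound $\|v(\cdot,t)\|_{k,\ep,\et} \le 2 y(0)$ on $[0,t_0]$ with constants independent of $(\ep,\et) \in [0,M]^2$ (the initial value $y(0)$ is controlled by $\|v_0\|_{H^{k+p+1}}$ using the preceding lemma).

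For existence, in the regime $\et > 0$ with $p \ge (n+3)/2$, Theorem \ref{th:exist0} already supplies global Lagrangian solutions, and the a priori estimate then confines them to a fixed ball on $[0,t_0]$. For the degenerate parameter values $\et = 0$ (including $\ep = \et = 0$, which is Euler's equation itself), I would approximate by sequences $(\ep_j,\et_j) \to (\ep,\et)$ with $\et_j > 0$ and consider the known solutions $v_j := v_{\ep_j,\et_j}$. The uniform bound together with the lemma gives $v_j$ bounded in $L^\infty([0,t_0];H^k)$, while EPDiff itself controls $\partial_t v_j$ in a lower Sobolev norm; an Aubin–Lions argument extracts a subsequence converging strongly in $C([0,t_0]; H^{k-1}_{\text{loc}})$, which suffices to pass to the limit in the quadratic nonlinearity and in the convolutions with $K_{\ep_j,\et_j}$. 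When $\ep_j \to 0$, the divergence-free constraint $\div v = 0$ emerges automatically from the control $\|\div v_j\|_{L^2}^2/\ep_j^2 \le C$ inside $\|v_j\|^2_{k,\ep_j,\et_j}$.

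For uniqueness and continuous dependence in $(\ep,\et)$, I would repeat the energy calculation for the difference $w = v^{(1)} - v^{(2)}$ of two solutions, possibly with different parameters $(\ep_i,\et_i)$. The same integration-by-parts bookkeeping that produced the boxed estimate produces a linearized variant of the form $|\partial_t \|w\|^2| \le C\|w\|^2 + E(\ep_1,\et_1;\ep_2,\et_2)$, where the error term $E$ encodes the difference of the operators $L_{\ep_i,\et_i}$ applied to a bounded solution and vanishes as the parameters coalesce. Gronwall then yields uniqueness ($E \equiv 0$) and continuous dependence on $(\ep,\et) \in [0,M]^2$.

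The hardest point is the limit $\ep \to 0$, where $L_{\ep,\et}$ degenerates, $K_{\ep,\et}$ develops the Calderon–Zygmund singularity studied in Section 1, and one is in effect reconstructing the pressure from the nonlinearity. Here the strong convergence from Aubin–Lions must be balanced against the non-local part of $K_{\ep,\et}$: I would argue at the level of the momentum $m$, showing $v_j = K_{\ep_j,\et_j} * m_j$ converges in a norm compatible with passing to the limit in $v_j \cdot \nabla m_j$, using the emerging constraint $\div v = 0$ to identify the limit equation with the Oseledets form of Euler's equation when both parameters vanish.
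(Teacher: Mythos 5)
Your proposal is correct and follows essentially the same route as the paper: the uniform-in-$(\ep,\et)$ cubic a priori estimate plus Gronwall gives the common time $t_0$ and the uniform ball, degenerate parameter values are reached by compactness from solutions at nearby non-degenerate parameters (the paper uses local compactness of bounded $H^k$-sets in $C^1$ where you invoke Aubin--Lions, which is the same device), and uniqueness comes from an energy estimate on the difference of two solutions, with the key cancellation of the $\ep^{-2}$ second-derivative-of-divergence terms under integration by parts. The only cosmetic difference is that for continuous dependence the paper argues softly (compactness plus uniqueness of the limit point) rather than via your quantitative error term $E$, which the paper defers to its later Theorems \ref{th:approx} and \ref{th:etbnd}.
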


For $\ep, \et >0$, existence and uniqueness for all time has been proven in \cite{TrouveYounes}. 
Their proof has been extended to the case $\ep=0, \et>0$ in Theorem \ref{th:exist0}. For 
$\ep=\et=0$, this is the well known result for Euler's equation. What remains is the new case 
$\ep>0, \et=0$. We follow a standard approach, used, for example, in \cite{Taylor}, Ch. 16 and 17. 
First consider existence. But by our estimate and Gronwall's lemma, we have a local upper bound 
{\it uniformly in} $\ep,\et$ for these solutions:     
$$ \|v_{\ep,\et}\|_{k,\ep,\et} \le C(t), t \in [0,t_0].$$
But, for $k,p$ as above, by the lemma we have $\|v\|_{H^k}\le C_1\|v\|_{k,\ep,\et}$ with 
$C_1$ independent of $\ep,\et$.
Thus the Hilbert space with the norm $\|\cdot\|_{k,\ep,\et}$  is compactly embedded in $C^1(\R^n)$ 
in the local sense that any bounded sequence for the former has a subsequence which, 
for every compact subset $K \subset \R^n$, converges in $C^1(K)$.  
Therefore $v_{\ep,\et}(t)$ lie in a `locally' compact part of the Banach space of $C^1$ functions of $(x,t)$. 
Therefore, as $\ep$ or $\et$ tend to zero, they have a convergent subsequence whose limit $v$ 
must be a solution of the corresponding EPDiff, because each equation can be written in terms of 
the corresponding kernel, and the kernels depend nicely on $\ep$ and $\et$.  
So by Gronwall's lemma again the original estimates gives $H^k$ bounds on this solution. 

Next we prove that the cluster point for $\ep\to0$ or $\et\to 0$ of the solutions $v_{\ep,\et}$ is unique. 
Let us temporarily abbreviate $L_{\ep,0}$ 
by $L$ and let $v$ and $\widetilde v$ be two solutions of EPDiff for this $L$. 
We write $u=v-\widetilde v$ for their difference and follow the ideas of the preceding estimate 
to estimate $\tfrac{d}{dt}\|u\|^2_L$.
\begin{align*}
\tfrac{d}{dt}\|u\|^2_L &= 2\int \langle \tfrac{d}{dt}Lu , u\rangle dx \qquad \text{so using summation of indices:}\\
&= -2\int u_i\cdot\left( v_j Lv_{i,j} + Lv_i\, \div v + Lv_j v_{j,i} 
- \widetilde v_j L\widetilde v_{i,j} - L\widetilde v_i \,\div \widetilde v 
- L\widetilde v_j \widetilde v_{j,i} \right)\\
&= -2\int u_i \left(u_j Lv_{i,j}+ \widetilde v_j Lu_{i,j} + Lu_i\, \div v 
+ L\widetilde v_i\, \div u + Lv_j u_{j,i} +L u_j \widetilde v_{j,i} \right)
\end{align*}
Next replace all expressions of the form $Lu_k$ by $u_k-\tfrac{1}{\ep^2}\p_k \div u$. 
Then integrate by parts by the ``div" part of the last term, that is replace 
$-u_i \cdot\tfrac{1}{\ep^2} (\div u)_{,j} \widetilde v_{j,i}$ by
$$
u_i \cdot\tfrac{1}{\ep^2} (\div u)_{,ij} \widetilde v_j + \div u \cdot \tfrac{1}{\ep^2}(\div u)_{,j} \widetilde v_j
$$
The term with the second derivative of $\div v$ cancels the term with the second derivative of 
$\div v$ arising from the second term $\widetilde v_j Lu_{i,j}$ in the above expression. With this 
and further integration by parts, we get:  
\begin{align*}
\tfrac{d}{dt}\|u\|^2_L &= -2\int u_i u_j (Lv_{i,j}+ \widetilde v_{j,i}) + u_i^2 \div v + u_i u_{i,j} \widetilde v_j + u_i u_{j,i} Lv_j + \\
&\qquad\qquad  +u_i \,\div u\, L\widetilde v_i +\tfrac{1}{\ep^2} (\div u)_{,i} (\div u \, \widetilde v_i - u_i \div v) \\
&= -2\int u_i u_j (Lv_{i,j}-Lv_{j,i}+ \widetilde v_{j,i}) + u_i^2 (\div v - \tfrac12 \div \widetilde v) + \\
&\qquad\qquad  +u_i \,\div u\,( L\widetilde v_i- Lv_i+\tfrac{1}{\ep^2}\div(v)_{,i}) 
+\tfrac{1}{\ep^2} (\div u)^2  (\div v - \tfrac12\div\widetilde v)\\
&\le C.\|u\|^2_L 
\end{align*}
where the constant depends on the strong sup bounds we have for $v$ and $\widetilde v$. By Gronwall 
again, this means that we have a growth estimate on $\|u\|^2_L$ as a function of $t$. In 
particular, if $u$ is zero at time 0, it is always zero and this proves uniqueness.  

Finally, as $\ep$ goes to zero, we again have the solutions lying in a `locally' compact part of 
$C^1$ (as above) so if there is only possible limit, they must converge to this limit and are 
continuous in $\et$. Likewise, as $\ep$ converges to zero, this solution must converge to that of 
Euler's equation.   

\section{Conserved quantities: linear and angular momentum}

We would like to derive the conservation laws from Noether's theorem using the fact that our 
geodesic equation is invariant with respect to the Euclidean group $SO(n)\ltimes \mathbb R^n$, as 
we did in our earlier paper \cite{MM07}. However, if we take 
$(X,w)\in \mathfrak{s}\mathfrak{o}(n)\ltimes \mathbb R^n$ to be the infinitesimal generator for the 
1-parameter group $(\exp(tX), tw)$, composition maps a diffeomorphism 
$\ph\in \on{Diff}_{\mathcal S}(\mathbb R^n)$ to the diffeomorphsm $\exp(tX)\o \ph + tw$. 
Unfortunately, the latter diffeomorphism no longer rapidly falls towards $\on{Id}_{\mathbb R^n}$ so 
it is not in $\on{Diff}_{\mathcal S}(\mathbb R^n)$. The infinitesimal generator for this action is        
$$
\ze_{(X,w)}(\ph) = \p_t|_0 (\exp(tX)\o \ph + tw) = X\o\ph +w.
$$
Consider a right invariant Riemannian metric $G$ on $\on{Diff}_{\mathcal S}(\mathbb R^n)$ as described 
for example in \cite{M127}, so that $G_\ph$ is an inner product on the tangent space at $\ph$, 
which is invariant under the motion group. 
Then for any geodesic $t\mapsto \ph(\cdot, t)$ the right invariant inner product 
$G_\ph(\ze_{(X,w)}(\ph), \ph_t)$ should constant in $t$, 
according to Noether's theorem in the form of  \cite[section 2.6]{M118}, 
if the action above was a left action of the motion group on $\on{Diff}_{\mathcal S}(\mathbb R^n)$. 
We could dedure this directly by taking 
$\on{Diff}_{\mathcal S}(\mathbb R^n)$ as the normal subgroup of an extension of the motion group 
which can be described as a group of diffeomorphisms which fall rapidly to ``Euclidean motions near 
infinity" and extend the metric to a right invariant one.
Instead of doing this in detail we directly check 
that the the above well defined expression is indeed constant in $t$ along each geodesic. 
Note first that
\begin{align*}
G_\ph(\ze_{(X,w)}(\ph),\ph_t) &= G_\ph(X\o \ph, \ph_t) + G_\ph(w,\ph_t) \\
&= G_{\on{Id}}(X,\ph_t\o \ph\i) + G_{\on{Id}}(w,\ph_t\o\ph\i) \\
& = G_{\on{Id}}(X,v) + G_{\on{Id}}(w,v)\quad\text{  where } 
\ph_t(\cdot,t) = v(\ph(\cdot,t),t) \\
&= \int \langle X.x, L(v)(x) \rangle\, dx + \int \langle w, L(v)(x) \rangle\, dx \\
&= \int (X.x, \widetilde m(x)) + \int (w, \widetilde m(x));
\end{align*}
the first expression viewed as a linear functional in $X\in\mathfrak{s}\mathfrak{o}(n)$ is the 
$\mathfrak{s}\mathfrak{o}(n)^*$-valued \emph{angular momentum mapping}. If we identify 
$\mathfrak{s}\mathfrak{o}(n)^*$ with $\mathfrak{s}\mathfrak{o}(n)$ via the Killing form we can 
write the angular momentum succinctly as  $\int x\wedge \widetilde m(x)$.    
Similarly the second expression leads to the \emph{linear momentum} given by $\int \widetilde m(x)$.

Let us finally prove that these momenta are conserved by the geodesic flow. We shall use the 
geodesic equation in the form $\p_t \widetilde m = -\L_v \widetilde m$. Then we have 
\begin{align*}
\p_t \int (X.x,\widetilde m(x)) &= \int (X.x,\p_t\widetilde m(x)) = -\int (X, \L_v\widetilde m)\\
&= \int (\L_v X,\widetilde m) = \int([v,X],\widetilde m) = \int (-\L_Xv,\widetilde m) \\
&=-\int \langle \L_Xv,Lv \rangle dx\qquad\text{ now use }\L_X(L) = 0\text{  and }\L_X(dx)= 0,\\
&=-\int\tfrac12 \L_X(\langle v,Lv \rangle dx) =0.
\end{align*}
For the linear momentum the proof is similar.

\section{Explicit bounds on the approximation I}

Assume you start with the same initial condition $ v(x,0)$ and integrate with both Euler's equation 
and EPDiff with $L_{\ep,\et}$. Exactly how close are they? If you look at the kernels 
$K_{\ep,\et}$, you see that the effect of $\ep>0$ is to shrink the tails of $K$ from polynomial to 
exponential and, correspondingly, to eliminate the pole of its Fourier transform at zero. On the 
other hand, the effect of $\et>0$ is to smooth the singularity of $K$ at zero or to suppress the 
high frequencies in its Fourier transform. These being opposite operations, we need to estimate 
their effects separately.       
In this section, we consider the case $\et=0$ and compare Euler's equation with that given by 
$L_{\ep,0}$. Let $ v_0(x,t)$ be the solution of Euler's equation and let $ v_\ep(x,t)$ be the 
solution of EPDiff with $L_{\ep,0}$ (below abbreviated to $L_\ep$). Our goal is to prove the 
theorem:   
\begin{theorem}\label{th:approx}
Take any $k$ and $M$ and any smooth initial velocity $v(\cdot,0)$. 
Then there are constants $t_0, C$ such that Euler's equation and 
$(\ep,0)$-EPDiff have solutions $v_0$ and $v_\ep$ respectively for $t \in [0,t_0]$ and all $\ep < M$ and these satisfy:
$$\|v_0(\cdot,t)-v_\ep(\cdot,t)\|_{H^k} \le C\ep.$$
\label{th:epsbnd} \end{theorem}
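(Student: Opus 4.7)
The plan is a direct energy comparison in $H^k$, in the spirit of the uniqueness argument from the proof of Theorem \ref{th:exist}. Start the $(\ep,0)$-EPDiff flow from the same smooth, divergence-free, rapidly decaying initial data as Euler, so $v_\ep(\cdot,0)=v_0(\cdot,0)$. Then $\|v_\ep(\cdot,0)\|_{k,\ep,0}=\|v_0(\cdot,0)\|_{H^k}$ is uniformly bounded in $\ep$, and Theorem \ref{th:exist} supplies a common existence interval $[0,t_0]$ with $\|v_\ep(\cdot,t)\|_{k,\ep,0}\le C$. Built into this bound is the small-divergence estimate
$$\|\div v_\ep(\cdot,t)\|_{H^k}\le C\ep,\qquad t\in[0,t_0],$$
which is what ultimately drives the linear rate.

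Using $K_\ep\ast f=f+\nabla(H_\ep\ast\div f)$, I would rewrite (\ref{eqn:epsEuler}) in Euler-like form and subtract from $\p_t v_0=-(v_0\cdot\nabla)v_0-\nabla(H\ast\div((v_0\cdot\nabla)v_0))$, where $H$ is the Green's function of $-\triangle$. The difference $u=v_\ep-v_0$ then satisfies a quasi-linear transport equation
\begin{align*}
\p_t u=&-(u\cdot\nabla)v_\ep-(v_0\cdot\nabla)u-\nabla\bigl(H_\ep\ast\div((u\cdot\nabla)v_\ep+(v_0\cdot\nabla)u)\bigr)\\
&\quad -\nabla\bigl((H_\ep-H)\ast\div((v_0\cdot\nabla)v_0)\bigr)-v_\ep\div v_\ep-\tfrac{\ep^2}{2}\nabla H_\ep\ast\bigl(|v_\ep|^2+(\tfrac{\div v_\ep}{\ep})^2\bigr),
\end{align*}
and I claim the last three remainders are each $O(\ep)$ in $H^k$: the $v_\ep\div v_\ep$ term by the algebra property and the small-divergence bound; the $|v_\ep|^2$ piece because the Fourier multiplier of $\ep^2\nabla H_\ep$ has modulus $\le\ep/2$ (by AM-GM on $\ep^2+|\xi|^2$); the $(\div v_\ep)^2$ piece because it is already $O(\ep^2)$; and the wrong-pressure term because its multiplier $\ep^2\xi\otimes\xi/(|\xi|^2(\ep^2+|\xi|^2))$ is bounded by $\min(1,\ep^2/|\xi|^2)\le\ep/|\xi|$.

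Finally, I would compute $\tfrac{d}{dt}\|u\|_{H^k}^2$: standard Kato--Ponce commutator estimates, combined with the uniform $H^{k+p+1}$-bounds on $v_0$ and $v_\ep$ inherited from Theorem \ref{th:exist}, control the transport and quasi-linear pressure pieces by $C\|u\|_{H^k}^2$, while the remainder analysis contributes $C\ep\|u\|_{H^k}$. Since $u(\cdot,0)=0$, Gronwall delivers $\|u(\cdot,t)\|_{H^k}\le C\ep$ on $[0,t_0]$. The main obstacle is the wrong-pressure term, whose multiplier only decays like $\ep/|\xi|$ at low frequencies; a sup-norm multiplier bound alone is not enough to get an $H^k\to H^k$ operator norm of order $\ep$. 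The rapid decay of $v_0$ at infinity---which persists because the flow stays in $\on{Diff}_{H^\infty}(\R^n)$ or $\on{Diff}_{\mathcal S}(\R^n)$---ensures $(v_0\cdot\nabla)v_0$ has well-behaved Fourier transform near $\xi=0$, absorbing the low-frequency singularity. Without such decay, the linear rate could conceivably degrade.
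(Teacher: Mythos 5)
Your overall strategy---an $H^k$ energy estimate for $u=v_\ep-v_0$, uniform a priori bounds from Theorem \ref{th:exist} including the small-divergence estimate $\|\div v_\ep\|_{H^k}\le C\ep$, Fourier-multiplier bounds on the kernel differences, and Gronwall---is exactly the paper's. However, two of your estimates are not justified as written. First, for the ``wrong-pressure'' remainder $\nabla\bigl((H_\ep-H)\ast\div((v_0\cdot\nabla)v_0)\bigr)$ you correctly identify the low-frequency problem, but the fix you offer (rapid spatial decay of $v_0$) is not the operative mechanism: rapid decay makes $\widehat{(v_0\cdot\nabla)v_0}$ smooth but does not make it vanish at $\xi=0$, and for $n=2$ the pointwise bound $\ep/|\xi|$ is not square-integrable near the origin against a merely bounded Fourier transform, so the claim does not close as stated. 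What actually saves the estimate, and what the paper uses, is the divergence form of the nonlinearity: since $(v\cdot\nabla)v_j=\sum_k\p_k(v_jv_k)-v_j\div v$, one derivative moves onto the kernel difference, whose symbol $\xi_i\xi_j\xi_k\,\ep^2/(|\xi|^2(\ep^2+|\xi|^2))$ is then bounded by $\ep/2$ \emph{uniformly} in $\xi$. (Your Fourier route can be salvaged using only $\|(v_0\cdot\nabla)v_0\|_{L^1}\le\|v_0\|_{L^2}\|\nabla v_0\|_{L^2}$ and a two-regime split at $|\xi|=\ep$, but that computation must be done; decay at infinity per se is a red herring.)

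Second, the quasi-linear pressure piece $\nabla\bigl(H_\ep\ast\div((v_0\cdot\nabla)u)\bigr)$ is not controlled by ``standard Kato--Ponce commutator estimates'': the operator $\nabla\o(H_\ep\ast)\o\div$ has $H^k$-operator norm $\le1$ but no better, and $(v_0\cdot\nabla)u$ costs $k+1$ derivatives of $u$, so a direct bound loses a derivative. The paper's resolution is to pair this term with $D^\al u$ and integrate by parts (equivalently, move the self-adjoint projections $K_0$, $K_\ep$ across the pairing) so that a factor $D^\al\div u$ appears; since $\div u=\div v_\ep$ is $O(\ep)$ in $H^k$ by the metric bound, and since $(v_0\cdot\nabla)u=\div(v_0\otimes u)$ avoids the derivative loss on the other factor, this term contributes $C\ep\|u\|_{H^k}$ rather than $C\|u\|_{H^k}^2$. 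You have all the ingredients---you even name the small-divergence bound as the driver of the linear rate---but the asserted bound for this term does not follow from commutator estimates alone; it is precisely the ``last term'' computation in the paper's proof. With these two repairs your argument closes along the same lines as the paper's.
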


Note that by Theorem \ref{th:exist} we have essentially any bound we need on both $v_0$ and 
$v_\ep$. The $t_0$ is needed only to guarantee the bounds on the solutions derived in Theorem 
\ref{th:exist} hold for a big enough $k$ to give us the needed bounds. As above the proof is based 
on an estimate of the form:   
\begin{equation}\label{eq:estimate}
\p_t\left(\|v_0-v_\ep\|^2_{H^k}\right) \le C_1 \|v_0-v_\ep\|^2_{H^k} + \ep C_2 \|v_0-v_\ep\|_{H^k}
\end{equation}
where $C_1$ and $C_2$ depend on the initial condition $v(\cdot,0)$ and $k$ but not on $\ep$ at all 
times $t$ for which Theorem \ref{th:exist} holds for needed norm bounds on $v_0$ and $v_\ep$. 

Let $u=v_0-v_\ep$ and calculate as follows, using the geodesic equation (\ref{eqn:epsEuler}) for $L_\ep$:
\begin{align}
\notag \tfrac12 \p_t &(\| u\|_{H^k}^2 ) =  \sum_{|\al| \le k} \int D^\al u\cdot(D^\al (\p_t v_0 - \p_t v_\ep)) 
\\\notag &
= \sum_{|\al| \le k}\int D^\al u\cdot D^\al \Big\{-K_0 \ast (v_0 \cdot \nabla)v_0  
\\\notag &
\quad + K_\ep \ast (v_\ep \cdot \nabla)v_\ep + v_\ep\cdot \div (v_\ep) 
+\tfrac{\ep^2}{2}(\nabla H_\ep)\ast |v_\ep(x)|^2 + \tfrac12 (\nabla H_\ep)\ast (\div v_\ep)^2  \Big\}  
\\\notag & 
\le \sum_{|\al| \le k} \|D^\al u\|_{L^2}\cdot
\Big\{\|D^\al(K_\ep-K_0)\ast(v_\ep \cdot \nabla)v_\ep \|_{L^2} +\|D^\al(v_\ep\cdot \div(v_\ep))\|_{L^2} 
\\\notag & 
\quad  + \|D^\al (\tfrac{\ep^2}{2} (\nabla H_\ep) \ast |v_\ep|^2)\|_{L^2}
+ \| D^\al (\tfrac12(\nabla H_\ep)\ast (\div v_\ep)^2)\|_{L^2} 
\\\notag & 
\quad  +\|K_0 \ast D^\al((u\cdot \nabla) v_\ep)\|_{L^2} \Big \} 
-\sum_{|\al| \le k} \int D^\al u \cdot K_0 \ast D^\al ((v_0\cdot \nabla)u)   
\\&\notag 
\le \|u\|_{H^k} \cdot \Big\{ \|(K_0-K_\ep)\ast (v_\ep \cdot \nabla) v_\ep\|_{H^k} + \|v_\ep\cdot \div(v_\ep)\|_{H^k} 
\\\notag & 
\quad + \|\tfrac{\ep^2}{2}\nabla H_\ep \ast |v_\ep|^2\|_{H^k} + \| \tfrac12 \nabla H_\ep \ast (\div v_\ep)^2\|_{H^k} 
\\& \label{eq:epmess}\raisetag{-.25in}
 \quad  +\|((v_0-v_\ep) \cdot \nabla)v_\ep\|_{H^k} \Big\} 
- \sum_{|\al| \le k} \int D^\al(v_0-K_0 \ast v_\ep) \ast D^\al ((v_0 \cdot \nabla) u)
\end{align}

Here, in the last line, we used the fact that $K_0$, being an orthogonal projection, has norm 1 and is self-adjoint. 
Likewise $K_\ep$, after Fourier transform, at frequency $\xi$, is multiplication by a diagonal 
matrix with eigenvalues 1 and $\ep^2/(\ep^2+|\xi|^2)$; hence is also a bounded self-adjoint operator with norm 1. 

For the first term, if we abbreviate $v_\ep$ to $v$, first write:
\begin{align*} \left((K_0-K_\ep)\ast (v\cdot \nabla ) v \right) &= F - (K_0-K_\ep)\ast v \cdot \div(v) \\
\text{where } F_i &= \sum_{j,k} (K_0-K_\ep)_{ij} \ast (v_j v_k)_{,k} = \sum_{j,k} \p_k(K_0-K_\ep)_{ij} \ast v_j v_k
\end{align*}

The Fourier transform of the derivative of the difference of the $K$'s is:
$$ \left(\p_k(K_0-K_\ep)_{ij}\right)\!\!\widehat{\vphantom{\big )}\hspace*{.15in}} 
= \sqrt{-1} \xi_i \xi_j \xi_k \frac{\ep^2}{|\xi|^2(\ep^2+|\xi|^2)}$$
Thus
\begin{align*}
\|F\|^2_{H^k} &= \int (1+|\xi|^2)^k \sum_i \left| \sum_{j,k} 
\tfrac{\ep^2 \xi_i \xi_j \xi_k}{|\xi|^2(\ep^2+|\xi|^2} \widehat{v_k v_j}\right|^2 \\
& \le \int (1+|\xi|^2)^k \tfrac{\ep^4}{|\xi|^2 (\ep^2 + |\xi|^2)^2}
\sum_{j,k}(\xi_j \xi_k)^2 \cdot \sum_{j,k} |\widehat{v_j v_k}|^2 |\\\
& \le \int (1+|\xi|^2)^k \tfrac{\ep^2}{4} 
\sum_{j,k} | \widehat{v_j v_k}|^2 = \tfrac{\ep^2}{4} \sum_{j,k} \|v_j v_k \|^2_{H^k}
\end{align*}
Repeating this for $v.\div(v)$ also, we find:
$$\|(K_0-K_\ep)\ast (v \cdot \nabla)v)\|_{H^k} \le \tfrac{\ep}2 
\sum_{j,k}\|v_j v_k\|_{H^k} + \|v\cdot \div(v)\|_{H^k}) < C\ep \|v\|_{k+m,\ep,0}^2$$
for some universal constant $C$ coming from the product rule for Sobolev spaces and 
$m = \lceil n/4 \rceil$. Along the way we also derived a similar bound for
the second term in the expression (\ref{eq:epmess}). 

To treat the third and fourth  terms we need the bound on the norm of convolution with $\nabla H_\ep$:
$$ |\ep\cdot\widehat{(H_\ep)_{,i}}| = \tfrac{\ep|\xi_i|}{\ep^2+|\xi|^2} \le \tfrac{|\xi_i|}{2|\xi|},$$
hence $\|\ep \nabla H_\ep \ast f\|_{H^k} \le \tfrac12 \|f\|_{H^k}$ for any function $f$ and in particular:
\begin{align*}
\|(\tfrac{\ep^2}{2}\nabla H_\ep)\ast |v_\ep(x)|^2\|_{H^k} 
&\le \tfrac{\ep}{2} \||v_\ep|^2\|_{H^k} \le C\ep \|v_\ep\|_{k+m,\ep,0}^2 \\
\|(\tfrac12 \nabla H_\ep)\ast (\div v_\ep)^2\|_{H^k} 
&\le \tfrac{\ep}{2} \|\left(\tfrac{\div v_\ep}{\ep} \right)^2\|_{H^k} \le C\ep \|v_\ep\|_{k+m,\ep,0}^2
\end{align*}

For the fifth term in expression (\ref{eq:epmess}) we use sup bounds on $k+1$ derivatives of 
$v_\ep$ and the Sobolev inequality to obtain:
$$ \|(u \cdot \nabla)v_\ep\|_{H^k} \le C \|u\|_{H^k} \cdot \|v_\ep\|_{H^\ell}, \quad \text{with } \ell 
= k+1+\lceil \tfrac{n+1}{2} \rceil
 $$

We come to the last term in (\ref{eq:epmess}). Up to constants, we write it as:
\begin{equation} \sum_{0 \le \be \le \al, |\al| \le k} \int D^\al u \cdot (D^\be v_0 \cdot \nabla)D^{\al-\be}u 
+ \int D^\al(v_\ep - K_0\ast v_\ep) \cdot D^\al(v_0\cdot \nabla) u
\label{eq:lastbit}\end{equation}
In the first term of (\ref{eq:lastbit}), the summand with $\be = 0$ vanishes because it equals 
$\int v_0\cdot \nabla (\tfrac{|D^\al u|^2}{2})$ 
and $v_0$ has zero divergence. Using a sup norm on $D^\be v_0$, the remaining summands are bounded 
by $\|v_0-v_\ep\|_{H^k}^2$ times this sup norm. This sup norm is bounded by a universal constant 
times $\|v_0\|_{H^\ell}$ with $\ell = k+\lceil \tfrac{n+1}{2} \rceil$. To bound the second term in 
(\ref{eq:lastbit}), using the expression for $K_0$ we find $v_\ep-K_0\ast v_\ep = \nabla H_0 
\ast \div(v_\ep)$. Now calculate:  
$$ \int D^\al (v_\ep - K_0 \ast v_\ep) \cdot D^\al((v_0\cdot \nabla) u) 
= -\sum_{i,j}\int D^\al(\p_i \p_j H_0 \ast \div(v_\ep)) \cdot D^\al((v_0)_i u_j) $$
But $\p_i \p_j H_0$ has Fourier transform $(\xi_i \xi_j)/|\xi|^2$, a matrix with eigenvalues 0 and 
1, so the $L^2$ norm of the first factor is bounded by $\ep \|v_\ep\|_{k,\ep,0}$. Then, as above, 
we get a bound of the form:  
$$C\ep \|v_\ep\|_{k,\ep,0} \cdot\|v_0 \|_{H^\ell} \cdot \|u\|_{H^k}$$
with $\ell = k + \lceil \tfrac{n+1}{2} \rceil$. Now using Theorem \ref{th:exist}, we see that we 
can bound all needed norms of $v_0$ and $v_\ep$ on this time interval by norms of the initial 
condition $v(\cdot,0)$. Putting everything together, we get the asserted bound \eqref{eq:estimate}. 

To complete the proof of the Theorem, rewrite \eqref{eq:estimate} in the form 
\begin{align*}
\p_t\|u\|_{H^k} &\le \frac{C_2}2\ep  + \frac{C_1}2\|u\|_{H^k} \quad\text{  or}
\\
\|u(t)\|_{H^k} &\le \frac{C_2}2\ep t  + \int_0^t\frac{C_1}2\|u(s)\|_{H^k}ds
\end{align*}
and apply Gronwall's lemma to 
obtain $\|v_0(t)-v_\ep(t)\|_{H^k}=\|u(t)\|_{H^k}\le \ep^2 C_3 e^{C_1 t /2} = O(\ep)$
as required.

In comparing Euler's equation with EPDiff for $(\ep,0)$, a key point is that $K_0 = P_{\div = 0}$ 
and $K_\ep = K_{\ep,0}$ have identical singularities at the origin, but their difference is much 
better behaved. In fact convolution with $K_0-K_\ep$ equals   
$$\nabla \circ (\text{convolution with }J_\ep) \circ \div$$
where $J_\ep$ has Fourier transform $1/|\xi|^2(\ep^2+|\xi|^2)$. Near the origin, this looks like 
$e^{-|x|}$ in $\R^3$, has a log pole in $\R^4$ and is like $1/|x|^{n-4}$ in higher spaces. 
Considering Euler's equation and EPDiff for $(\ep,0)$ in Lagrangian form (\ref{eq:lagrange}), they 
differ only by changing the convolution on the right hand side by this term. This makes it seems 
reasonable to conjecture that if solutions of $(\ep,0)$-EPDiff do not blow up, i.e. exist for all 
time, then neither do the solutions to Euler's equation. Or conversely, if Euler's solutions do 
blow up, so do solutions of this EPDiff.

\section{Explicit bounds on the approximation II}

Now we want to compare solutions of EPDiff for $\ep>0, \et=0$ with solutions for $\ep>0, \et>0$. 
The difference here is a convolution with the Gaussian $G_\et$, so solutions with $\et>0$ are 
essentially just smoothed or low-pass version of those with $\et=0$. We will prove:  
\begin{theorem}
Let $\ep>0$. Take any $k$ and $M$ and any smooth initial velocity $v(\cdot,0)$. Then there are 
constants $t_0, C$ such that $(\ep,0)$-EPDiff and $(\ep,\et)$-EPDiff have solutions $v_0$ and 
$v_\et$ respectively for $t \in [0,t_0]$ and all $\ep, \et < M$ and these satisfy:  
$$\|v_0(\cdot,t)-v_\et(\cdot,t)\|_{H^k} \le C\et^2.$$
\label{th:etbnd} \end{theorem}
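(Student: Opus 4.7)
The proof follows the template of Theorem \ref{th:epsbnd}: set $u = v_0 - v_\et$ and aim for a differential inequality
\begin{equation*}
\p_t \|u\|^2_{H^k} \le C_1 \|u\|^2_{H^k} + C_2 \et^2 \|u\|_{H^k}
\end{equation*}
on the common existence interval $[0, t_0]$ provided by Theorem \ref{th:exist}; Gronwall's lemma will then deliver $\|u(t)\|_{H^k} \le C\et^2$. Throughout, Theorem \ref{th:exist} supplies uniform bounds on $v_0$ and $v_\et$, and hence on the momenta $m_0 = L_\ep v_0$ and $m_\et = L_{\ep,\et} v_\et$, in any prescribed $H^s$ (at the cost of enlarging $k$ by a fixed amount depending on $p$ and $n$), so the only genuinely new ingredient is an estimate on the kernel difference.

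Since $K_{\ep,\et} = G^{(p)}_\et \ast K_\ep$, one has $K_\ep - K_{\ep,\et} = (I - G^{(p)}_\et) \ast K_\ep$, whose Fourier multiplier is $1 - (1 + \et^2|\xi|^2/p)^{-p}$. An elementary calculus argument (the function $a \mapsto pa + (1+a)^{-p} - 1$ vanishes at $0$ and has non-negative derivative on $[0,\infty)$) yields $0 \le 1 - (1+a)^{-p} \le pa$ for $a \ge 0$, so that
\begin{equation*}
\bigl|\widehat{(I - G^{(p)}_\et)}(\xi)\bigr| \le \et^2 |\xi|^2.
\end{equation*}
Combined with the fact that $K_\ep$ is a bounded operator on every Sobolev space with norm independent of $\ep$ (its matrix symbol $\de_{ij} - \xi_i\xi_j/(\ep^2 + |\xi|^2)$ has eigenvalues in $[0,1]$), this furnishes the key estimate
\begin{equation*}
\|(K_\ep - K_{\ep,\et}) \ast f\|_{H^k} \le C \et^2 \|f\|_{H^{k+2}},
\end{equation*}
with $C$ independent of $\ep$ and $\et$.

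Writing both evolutions schematically as $\p_t v = -K \ast N(v,m)$ for the momentum-nonlinear right-hand side of EPDiff, I telescope
\begin{equation*}
K_\ep \ast N(v_0, m_0) - K_{\ep,\et} \ast N(v_\et, m_\et) = K_\ep \ast \bigl[N(v_0, m_0) - N(v_\et, m_\et)\bigr] + (K_\ep - K_{\ep,\et}) \ast N(v_\et, m_\et).
\end{equation*}
The second summand contributes the $O(\et^2)$ term by the kernel estimate and Theorem \ref{th:exist}. For the first summand, decompose $m_0 - m_\et = L_\ep u + (I - M_\et) L_\ep v_\et$; the first piece is linear in $u$ and is paired with $u$ in $H^k$ exactly as in the proof of Theorem \ref{th:epsbnd}, with derivative-losing factors of the form $K_\ep \ast (v_\et \cdot \nabla) L_\ep u$ tamed by self-adjointness of $K_\ep$ and integration by parts, while the second piece $I - M_\et = I - (I - \tfrac{\et^2}{p}\triangle)^p$ is a polynomial in $\et^2 \triangle$ starting at order $\et^2$, supplying further explicit $\et^2$ factors. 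The main obstacle is the derivative bookkeeping: one must verify that each term arising in $\p_t\|u\|^2_{H^k}$ is absorbed by the uniform $H^{k + p + \text{const}}$ bounds from Theorem \ref{th:exist}, so that $C_1$ and $C_2$ are finite and independent of $\et$. Once the differential inequality is established, Gronwall yields $\|u(t)\|_{H^k} = O(\et^2)$ as claimed.
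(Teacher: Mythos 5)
Your key lemma is exactly the one the paper uses: the Fourier--multiplier bound $1-(1+\tfrac{\et^2}{p}|\xi|^2)^{-p}\le \et^2|\xi|^2$ (the paper's inequality \eqref{eq:gaussbnd}), and your decomposition $m_0-m_\et=L_\ep u+(I-M_\et)L_\ep v_\et$ is the same as the paper's $\widetilde u=u-\tfrac{1}{\ep^2}\nabla\div(u)+O(\et^2\triangle m_\et)$. So the $O(\et^2)$ source terms are handled correctly. The genuine gap is in the energy you propose to differentiate. You aim to close a Gronwall inequality for the plain norm $\|u\|_{H^k}^2$, but the linear-in-$u$ part of the difference of nonlinearities contains terms such as $(v_0\cdot\nabla)\bigl(\tfrac{1}{\ep^2}\nabla\div(u)\bigr)$ and $\tfrac{1}{\ep^2}\div(v_0)_{,i}\,\div(u)$ coming from the $-\tfrac{1}{\ep^2}\nabla\circ\div$ part of $L_\ep$. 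After pairing with $D^\al u$ (even using self-adjointness of $K_\ep$ and integrating by parts), what survives is controlled only by quantities like $\|u\|_{H^k}\cdot\|\tfrac{\div u}{\ep}\|_{H^k}$, and the top-order transport term requires a cancellation at the level of $D^\al\tfrac{\div u}{\ep}$, namely $\int D^\al\tfrac{\div u}{\ep}\,(v_0\cdot\nabla)D^\al\tfrac{\div u}{\ep}=-\tfrac12\int \div(v_0)\,|D^\al\tfrac{\div u}{\ep}|^2+\cdots$. None of this is absorbed by $C\|u\|_{H^k}^2$: you lose derivatives and pick up $\ep^{-2}$ factors, so the inequality as you state it does not close (and the theorem asks for constants uniform over all $\ep<M$, so you cannot hide $\ep^{-2}$ in $C_1$).

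The fix, which is what the paper does, is to run the energy argument on
\begin{equation*}
\|u\|^2_{k,\ep,0}=\sum_{|\al|\le k}\int \langle D^\al u, D^\al L_\ep u\rangle
=\|u\|^2_{H^k}+\bigl\|\tfrac{\div u}{\ep}\bigr\|^2_{H^k},
\end{equation*}
i.e.\ to pair $D^\al u$ with $D^\al\bigl(\p_t m_0-G^{(p)}_\et\ast\p_t m_\et\bigr)$ rather than with $D^\al\p_t u$. This simultaneously puts $\|\tfrac{\div u}{\ep}\|_{H^k}$ on the left so the cross terms can be absorbed, and produces the divergence-part transport cancellation above. One then obtains $\tfrac12\p_t\|u\|^2_{k,\ep,0}\le C_1\|u\|^2_{k,\ep,0}+\et^2C_2\|u\|_{k,\ep,0}$ with constants uniform in $\ep,\et<M$, and since $\|u\|_{H^k}\le\|u\|_{k,\ep,0}$ and $u(\cdot,0)=0$, Gronwall on the stronger norm still yields the stated $H^k$ bound. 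With that replacement your argument matches the paper's; without it, the decisive step fails.
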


A basic tool is the simple estimate:
\begin{equation} \|f-G_\et^{(p)} \ast f\|_{L^2} \le \et^2 \|\triangle f\|_{L^2}
\label{eq:gaussbnd} \end{equation}
To prove this, just take Fourier transforms and use the elementary inequality:
$$\left(1-(1+\tfrac{\et^2}{p}|\xi|^2)^{-p}\right) \le \et^2 |\xi|^2.$$
Working as in the setup of Theorem \ref{th:approx}, let $m_0$ and $m_\et$ be the momenta corresponding to $v_0$ and $v_\et$. 
Write $u=v_0-v_\et$ and calculate the time derivative of:
$$ \|u\|^2_{k,\ep,0} = \sum_{|\al| \le k}\int \langle D^\al u, D^\al L_\ep u \rangle 
= \|u\|^2_{H^k} + \|\tfrac{\div (u)}{\ep}\|^2_{H^k}.$$
We get a lot of terms:
\begin{align}
\notag \tfrac12 \p_t \|u\|^2_{k,\ep,0} 
&= \sum_{|\al|\le k}\int D^\al u \cdot D^\al \left((\p_t m_0 - G_\et^{(p)} \ast \p_t m_\et \right)\\
\label{eq:9terms} 
&= \sum_\al \int D^\al u \cdot D^\al \left\{ (v_0\cdot \nabla)m_0- (v_\et \cdot \nabla) m_\et 
+(I- G_\et^{(p)}) \ast (v_\et \cdot \nabla) (m_\et) \right. \\
\notag & \qquad\qquad\qquad + \div(v_0) m_0 -\div(v_\et) m_\et +(I- G_\et^{(p)}) \ast (\div(v_\et) m_\et) \\
\notag & \qquad\qquad\qquad \left. + m_0\cdot (Dv_0)^t 
- m_\et \cdot (Dv_\et)^t + (I- G_\et^{(p)}) \ast m_\et \cdot (Dv_\et)^t \right\}
\end{align}
By the bound (\ref{eq:gaussbnd}), the three terms with $I-G_\et^{(p)}$ are bounded by $\|u\|_{H^k}$ times 
$\et^2\|(v_\et \cdot \nabla) m_\et\|_{H^{k+2}}$ and $\et^2\|m_\et \cdot \div(v_\et)\|_{H^{k+2}}$ and 
$\et^2\|m_\et \cdot (Dv_\et)^t\|_{H^{k+2}}$. Hence if $\ell = 1+\max(k+2, \lceil \tfrac{n}2 
\rceil)$, then, by the product rule for Sobolev norms, all three terms are bounded by $C\et^2 
\cdot\|v_\et\|_{H^\ell} \cdot \|m_\et\|_{H^\ell}$ for some constant $C$ depending only on $k$ and 
$n$. Using Theorem \ref{th:exist}, this is bounded by $C' \et^2$, where $C'$ is another constant 
now depending on the initial data as well as $k$ and $n$.    

If $\widetilde u = m_0-m_\et$, we can write the remaining terms in (\ref{eq:9terms}) as:
$$ 
(u \cdot \nabla) m_\et,\quad 
(v_0\cdot \nabla)\widetilde u,\quad 
m_\et \div(u),\quad 
\widetilde u \div(v_0), \quad 
m_\et \cdot (Du)^t, \quad 
\widetilde u \cdot (Dv_0)^t
$$
Next use the calculation:
\begin{align*}
\widetilde u &= m_0 - G_\et^{(p)} \ast m_\et -(I-G_\et^{(p)})\ast m_\et \\
&= L_\ep(v_0 - v_\et) + \text{term bnded by } \et^2 \triangle(m_\et) \text{  in }H^{k+2}\\
&= u - \tfrac{1}{\ep^2} \nabla(\div(u)) + \text{term bnded by } \et^2 \triangle(m_\et) \text{  in }H^{k+2}
\end{align*}
The $\et^2 \triangle(m_\et)$ terms are bounded like the previous ones. We finish the proof by 
applying the same tricks we have seen above to the remaining terms. Letting $C$ denote suitable 
constants depending on bounds for $v_0$ and $v_\et$, the terms with $u$, not $\widetilde u$, are 
easy:   
\begin{align*} 
&\sum_\al \int D^\al u \cdot D^\al((u\cdot \nabla) m_\et) \le C \|u\|^2_{H^k}\\
&\sum_\al \int D^\al u \cdot D^\al (\div u \cdot m_\et) \le C\|u\|_{H^k}  \|\div u \|_{H^k} \le C \|u\|^2_{k,\ep,0}\\
&\sum_\al \int D^\al u \!\cdot\! D^\al(m_\et \!\cdot\! (Du)^t) 
= - \sum_\al \int D^\al \div(u) \!\cdot\! D^\al(u\!\cdot\! m_\et) + D^\al u \!\cdot\! D^\al (u \!\cdot\! (Dm_\et)^t)\\
&\qquad \qquad \qquad \qquad \qquad \qquad \le C\|u\|^2_{H^k} + C \|u\|_{H^k} \|\div u\|_{H^k} \le C \|u\|^2_{k,\ep,0}
\end{align*}
Finally, the $\widetilde u$ terms have two more pieces, one where it is replaced by $u$ and the 
other with $\tfrac{1}{\ep^2} \nabla \div (u)$. If it is replaced by $u$, everything is bounded as 
above by $C\|u\|^2_{H^k}$ but where the usual trick is needed:  
$$ \int D^\al u \cdot D^\al((v_0\cdot \nabla) u) 
=\int D^\al u \cdot (v_0\cdot \nabla) D^\al u) + \text{terms with } \nabla D^\be u, \be < \al,$$
the latter being bounded by $\|u\|^2_{H^k}$ and the former being equal to
$$\tfrac12 \int (v_0 \cdot \nabla)|D^\al u|^2 = -\tfrac12 \int \div(v_0) \cdot |D^\al u|^2.$$
The div terms have the $1/\ep^2$ factor but also a cancellation and reduce to:
\begin{multline*} \int D^\al \tfrac{\div(u)}{\ep} \cdot D^\al( (v_0 \cdot \nabla) \tfrac{\div(u)}{\ep})=\\
-\tfrac12 \int \div(v_0) |D^\al \tfrac{\div(u)}{\ep}|^2 
+ \text{terms with }\nabla D^\be \tfrac{\div(u)}{\ep}, \be < \al.\end{multline*}
and
\begin{multline*} 
-\int D^\al u_i \cdot D^\al\left(\tfrac{\div(v_0)}{\ep} \tfrac{\div(u)_{,i}}{\ep}\right) = \\
\int D^\al \div(u) \cdot D^\al\left(\tfrac{\div(v_0)}{\ep} \tfrac{\div(u)}{\ep}\right) 
+ \int D^\al u_i D^\al \left( \tfrac{\div(v_0)_{,i}}{\ep} \tfrac{\div(u)}{\ep}\right) 
\end{multline*}
which have the needed bounds. 
Thus we have the estimate
$$
\tfrac12 \p_t \|u\|^2_{k,\ep,0} \le C_1\|u\|_{k,\ep,0}^2 + \et^2 C_2\|u\|_{k,\ep,0},
$$
and we can use Gronwall's lemma as in the end of the proof of Theorem \ref{th:approx}, to finish 
the proof.

\section{Approximating Euler solutions via landmark theory}

The great advantage of having a $C^1$ kernel is that we can now consider solutions 
in which the momentum $m$ is supported in a finite set $\{P_1, \cdots, P_N\}$, so that the 
components of the momentum field are given by
$m^i(x) = \sum_a m_{ai}\de(x-P_a)$.
The support is called the set of landmark points $\{P_1, \cdots, P_N\}$ and in this case, 
EPDiff reduces to a set of Hamiltonian ODE's based on the kernel $K = K_{\ep,\et}, \ep \ge 0, \et >0$:
$$\boxed{\begin{aligned}
\text{Energy } E&= \sum_{a,b} m_{ai} K_{ij}(P_a-P_b) m_{jb}\\
\frac{dP_{ai}}{dt} &= \sum_{b,j} K_{ij}(P_a-P_b) m_{bj} \\
\frac{dm_{ai}}{dt} &= -\sum_{b,j,k} \p_{x_i} K_{jk}(P_a-P_b) m_{aj}m_{bk}\\
\end{aligned}}$$
where $a,b$ enumerate the points and $i,j,k$ the dimensions in $\R^n$. These are essentially 
Roberts' equations from \cite{Roberts}. His paper takes $n=3$ so that the landmark points are the 
center of `circular vortex rings'. He assumes they do not get too close to each other and takes 
$K(x)$ at all $x \ne 0$ to be the Euler kernel $P_{\div=0}$, our $K_{0,0}$. He sets $K(0) = 
\de_{ij} \ka$ for a constant $\ka$ which comes out of the specific model used for each finite 
(non-infinitesimal) vortex ring. What using our kernel $K_{0,\et}$ does is just smoothly 
interpolate between the kernel $P_{\div=0}$ at points $x$ far from $0$ -- but which is singular at 
$0$ -- and a $C^1$ function near $0$ with $K(0)=\de_{ij}.\ka$.       

For some other PDEs (like the KdV or Camassa-Holm equations) solutions whose momenta are sums of 
delta distribution are called solitons. In analogy to this we can call \emph{vortex-solitons} or 
\emph{vortons} the solutions with momenta supported in finite sets.  

For every landmark tangent vector $\sum_a X_a \de(x-P_a)$ there exist a divergence free vector 
field $v$ with compact support with $v(P_a)=X_a$. Thus the space of soliton-like momenta $m(x) = 
\sum_a m_a\de(x-P_a)$ is injectively embedded in the dual of the space of divergence free vector 
fields (with compact support, of in $\mathcal S$, or in $H^\infty$). This means, that landmark 
theory as explained below is already adapted to the subgroup $\on{SDiff}_{H^\infty}(\mathbb R^n)$. 

All of our kernels have the form $K_{ij}(x) = G_1(|x|)\de_{ij} + \p_{x_i} \p_{x_j} G_2(|x|)$ hence 
at every point $x$ have eigenspaces $\R x$ and $(\R x)^\perp$. For any vector $x$, let $x = 
\rh_x\cdot u_x$, where $\rh_x = |x|$ and $u_x$ is a unit vector; and let $P_{u_x}$ be the 
projection to the subspace $\R\cdot u_x$ and $P_{u_x^\perp}$ be the projection onto the 
perpendicular subspace $(\R\cdot u_x)^\perp$. Then the matrix $K_{ij}(x)$ can be written in terms 
of two scalar functions $K_1$ and $K_2$ as      
$$ K(x) = K_1(\rh_x) P_{u_x} + K_2(\rh_x) P_{u_x^\perp}, \text { if } x \ne 0$$ 
and as $\ka\cdot I$ at the origin. If $K_1 = K_2$, then $K_{ij}$ would be a multiple of the 
identity and we would have the case studied in our previous paper \cite{MMM1}. But this never 
happens for our metrics. For example, in the $K_{0,\et}$ case, using formula 
(\ref{eq:explicitconv}) and the fact that $G^{(p)}_\et$ is a monotone decreasing function of $|x|$, 
we get:     
\begin{align*} 
&K_1(x) =\tfrac23 \text{Mean}_{B_{|x|}}(G^{(p)}_\et)(x) \\
&K_2(x) = G^{(p)}_\et(x) - \tfrac13 \text{Mean}_{B_{|x|}}(G^{(p)}_\et)(x)\\
&(K_1-K_2)(x) = \text{Mean}_{B_{|x|}}(G^{(p)}_\et)(x)- G^{(p)}_\et(x)\ge 0, \\
&\ka = \tfrac23 G^{(p)}_\et(0) > K_1(x) > K_2(x) \text{ if } x \ne 0
\end{align*}

If we differentiate the formula for $K$, we get the following formula for its derivative:
\begin{multline*} D_v K(x) = K_1'(|x|) \langle v,u_x\rangle P_{u_x} + K_2'(|x|) \langle v,u_x\rangle P_{u_x^\perp}\\ 
+ \frac{K_1(|x|)-K_2(|x|)}{|x|} \langle v, u_x^{\perp} \rangle \left(u_x \otimes u_x^{\perp} 
+ u_x^{\perp} \otimes u_x\right).\end{multline*}
Using this we can rewrite the geodesic equations in a geometric form:
\begin{align*}
E&= \sum_a \ka\cdot |m_a|^2 + \sum_{a\ne b} K_1(\rh_{ab})\langle P_{u_{ab}}m_a,P_{u_{ab}}m_b\rangle 
+ K_2(\rh_{ab})\langle P_{u_{ab}^\perp}m_a,P_{u_{ab}^\perp}m_b\rangle\\
\frac{dP_{a}}{dt} &= \ka \cdot m_a + \sum_{b\ne a} K_1(\rh_{ab}) P_{u_{ab}} m_b + K_2(\rh_{ab}) P_{u_{ab}^\perp} m_b \\
\frac{dm_{a}}{dt} &= -\sum_{b\ne a}  \left(K_1'(\rh_{ab}) \langle P_{u_{ab}}m_a, P_{u_{ab}}m_b\rangle  
+ K_2'(\rh_{ab})\langle P_{u_{ab}^\perp}m_a, P_{u_{ab}^\perp}m_b\rangle\right)u_{ab} \\
&\qquad -\sum_{a\ne b} \frac{K_1(\rh_{ab})-K_2(\rh_{ab})}{\rh_{ab}} \left( \langle m_a,u_{ab}\rangle P_{u_{ab}^\perp}m_b 
+ \langle m_b,u_{ab}\rangle P_{u_{ab}^\perp} m_a \right)
\end{align*}

One of the characteristics of these landmark space EPDiff geodesics as that when two landmarks near 
each other, they can either repel or attract. If their energy is low compared to their angular 
momentum, they repel and vice versa. When they attract, they typically spiral in towards each other 
with the momentum of each landmark point growing infinitely while their sum remains bounded. They 
do not collide in finite time. Whether this characteristic reflects developing singularity behavior 
in Euler's equation is not clear because, as soon as landmarks approach closer than $\et$, 
solutions of EPDiff are no longer close to those of Euler. This attraction is clear with only two 
landmark points but, at least in the case of the Weil-Peterson metric on cosets of 
$\on{Diff}(S^1)$, following a geodesic typically produces a hierarchical clustering of many 
landmarks (unpublished work of Sergey Kushnarev and Matt Feizsli).         

We want to look at the simplest cases of one or two landmark points. One landmark point is very 
simple:  
its momentum must be constant hence so is its velocity. 
Therefore it moves uniformly in a straight line $\ell$ from $-\infty$ to $+\infty$. 
As a geodesic in $\on{Diff}(\R^n)$, it will push everything in front of it, 
compressing points ahead of it on $\ell$ while pushing out points near $\ell$ to maintain incompressibility. 
Behind the landmark, they will be sucked back towards $\ell$ to compensate for the rarification left by its passage. 
By rotational symmetry around $\ell$ and time-reversal symmetry, the motion, 
from $t=-\infty$ to $t=+\infty$ c an only be a shear in which points are dragged forward parallel to $\ell$ 
by a distance which goes to zero as you go further from $\ell$ and goes to $\infty$ as you approach $\ell$. 
\begin{figure}[h!]
\epsfig{file=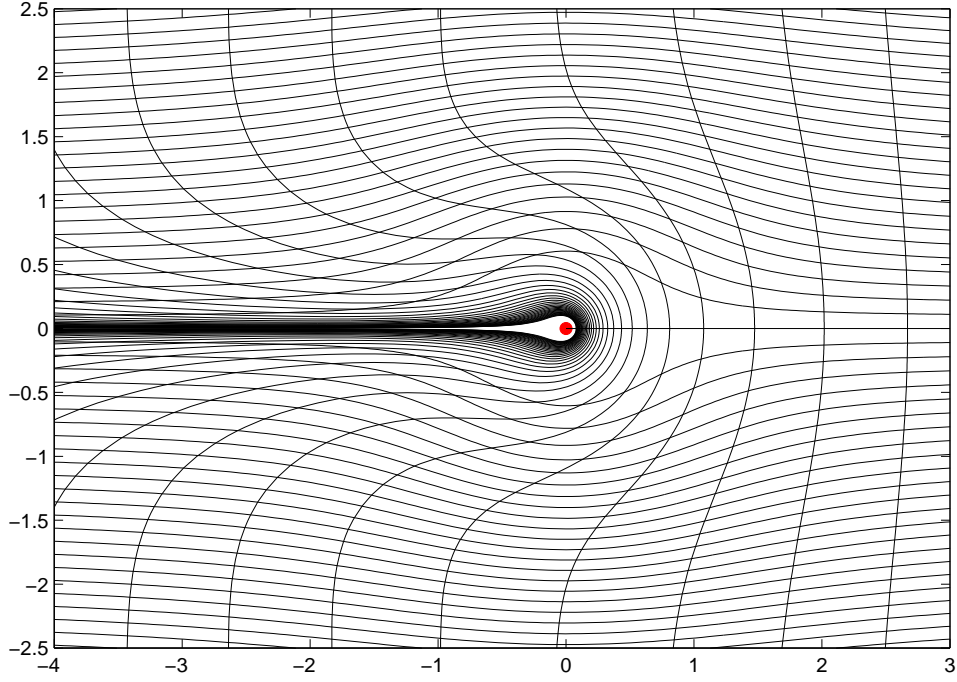,width=5in}
\caption{The result of the incompressible flow from $t=-\infty$ to $t=0$ with $n=2$, momentum 
concentrated at one point and $K=K_{0,\et}$, $\et = 1$.}\label{fig:onelandmark} 
\end{figure}
 
Now consider the case of two landmark points $P_1, P_2$ with momenta $m_1, m_2$. 
By conservation of total momentum, $m_1+m_2$ is a constant. 
We can reduce this Hamiltonian system by fixing the total momentum $\overline{m}$ and dividing by translations. 
We get a new system in the variables $\de P = P_2-P_1 $ and $\de m = m_2-m_1$ with equations of motion:
\begin{align*}
E &= \ka\cdot \left(|\de m|^2 + |\overline{m}|^2\right) 
+ \sum_{ij} K_{ij}(\de P) \cdot \left(\overline{m}_i \overline{m}_j - \de m_i \de m_j \right) \\
\frac{d(\de P_i)}{dt} &= \ka \cdot \de m_i - \sum_j K_{ij}(\de P)\cdot \de m_j \\
\frac{d(\de m_i)}{dt} &= -\sum_{jk} \p_{x_i} K_{jk}(\de P) \cdot(\overline{m}_j \overline{m}_k - \de m_j \de m_k )
\end{align*}
or, letting $\de P = \rh \cdot u$ for a unit vector $u$, in geometric form:
\begin{align*}
E &= \ka\cdot \left(|\de m|^2 + |\overline{m}|^2\right) + K_1(\rh)\left(|P_u \overline{m}|^2 - |P_u \de m|^2\right) 
\\& \qquad 
+ K_2(\rh) \left(|P_{u^\perp} \overline{m}|^2 - |P_{u^\perp} \de m|^2\right)  \\
\frac{d(\de P)}{dt} &= \ka \cdot \de m - K_1(\rh)\cdot P_u \de m - K_2(\rh)\cdot  P_{u^\perp} \de m \\
\frac{d(\de m)}{dt} &= - \left(K_1'(\rh)( |P_{u} \overline{m}|^2 - |P_u \de m|^2)  
+ K_2'(\rh)(|P_{u^\perp} \overline m|^2 - |P_{u^\perp} \de m|^2  \right)u \\
&\qquad -\frac{K_1(\rh)-K_2(\rh)}{\rh} \left( \langle \overline m,u\rangle P_{u^\perp}\overline m 
-\langle \de m, u \rangle P_{u^\perp} \de m\right)
\end{align*}

Note that the derivatives of $\de p$ and $\de m$ lie in the span of $\de P, \de m$ and 
$\overline{m}$. Thus this three dimensional space is constant in time so we can assume $\de P, 
\de m, \overline{m} \in \R^3$. The total angular momentum is:  
$$ \om = P_1 \wedge m_1 + P_2 \wedge m_2 = \tfrac{P_1+P_2}{2} \wedge \overline{m} + \tfrac12 \de P \wedge \de m.$$
If $\overline{m} = 0$, then the two vectors $\de p, \de m$ always lie in a fixed two dimensional 
space and their cross product is constant, equal to $2\om$. We can then make a further symplectic 
reduction and compute what happens in terms of the three scalar variables $\rh, 
\langle \de P,\de m\rangle, |\de m|$ which moreover must lie on one sheet of a hyperboloid: 
$$ 4|\om|^2 + \langle \de P, \de m \rangle^2 = \rh^2 \cdot |\de m |^2,\quad \rh \cdot |\de m| \ge 2|\om|.$$ 
The energy then simplifies to 
\begin{align*} 
E &= (\ka-K_2(\rh))|\de m|^2 - \tfrac{K_1(\rh)-K_2(\rh)}{\rh^2} \langle \de P, \de m \rangle^2 \\
&= \tfrac{\ka -K_1(\rh)}{\rh^2} \langle \de P, \de m \rangle^2 + 4\tfrac{\ka -K_2(\rh)}{\rh^2}|\om|^2.
\end{align*}
Its level curves on the hyperboloid must then be the geodesics. Note that as long as the kernel is 
$C^2$, $(\ka-K_1(\rh))/\rh^2$ and $(K_1(\rh)-K_2(\rh))/\rh^2$ are finite at the origin hence bounded. 

We can illustrate this in the simple case of 3-space with kernel $K_{0,1}, p=3$. As  stated above, 
then the smoothing kernel is $C^2$ and has the elementary expression  
$$G^{(3)}_1(x) = (1+|x|) e^{-|x|} = 1 - \tfrac{|x|^2}{2} + \cdots.$$ 
It's easy to calculate the mean of this function over a ball and we get:
\begin{align*} \text{Mean}_{B_{|x|}}\left(G^{(3)}_1\right)(x) 
&= 24 |x|^{-3}\left(1-e^{-|x|}\left(1+|x| + \tfrac{|x|^2}{2} + \tfrac{|x|^3}{8} \right) \right)\\
&= e^{-|x|}\left(1+\sum_{n=4}^\infty \tfrac{4}{n!}|x|^{n-3} \right)\\
&= 1 - \tfrac{3|x|^2}{10} + \cdots
\end{align*}
hence
$$ \ka = \tfrac23, \quad K_1 = \tfrac23 - \tfrac15 x^2 + \cdots, \quad K_2 = \tfrac23 - \tfrac 25 x^2 + \cdots.$$
A typical plot of the contours of $E$ in the $(\rh, |\de m|)$-plane is shown in Figure \ref{fig:Econt}. 

\begin{figure}[h!]
\epsfig{file=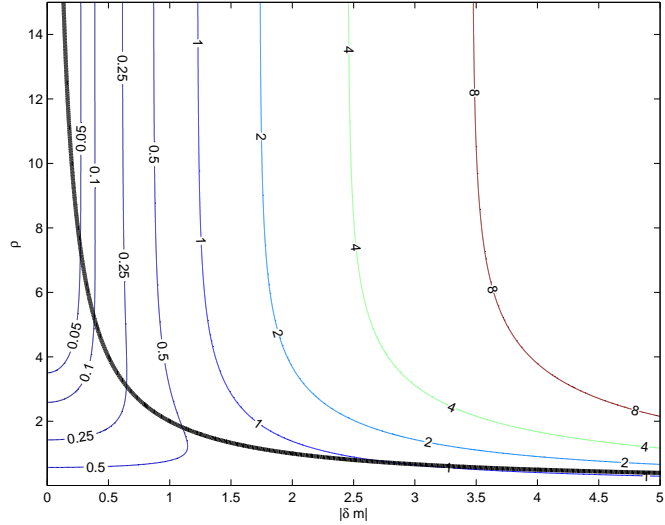,width=3.5in}
\caption{Level sets of energy for the collision of two vortons with $\overline{m} = 0$, $\et=1$, $\om = 1$. 
The coordinates are $\rh=|\de P|$ and $|\de m|$, and the state space is the double cover of the 
area above and right of the heavy black line, the two sheets being distinguished by the sign of 
$\langle \de m, \de P\rangle$. The heavy black line which is the curve $\rh\cdot |\de m| = \om$ 
where $\langle \de m, \de P\rangle=0$. Each level set is a geodesic. If they hit the black line, 
they flip to the other sheet and retrace their path. Otherwise $\rh$ goes to zero at one end of the 
geodesic.}\label{fig:Econt}     
\end{figure}

In the figure, if an orbit hits the heavy black line defined by $\rh \cdot |\de m|=\om$, then 
$\langle \de P, \de m\rangle$ is instantaneously zero and, along its orbit, changes sign. On the 
two-sheeted cover given by including this sign, this is a smooth orbit in which $\rh$ decreases to 
a minimum where $\langle \de P, \de m\rangle=0$ and then increases. One sees that there are two 
types of orbits: scattering orbits where the vortons separate infinitely at both $t=\pm \infty$ and 
$\rh$ has a minimum at some point in time; and capturing orbits which either start or end at 
infinity but spiral indefinitely, getting closer and closer, at the other limit. Which happens 
depends on the relative size of the angular momentum and the energy exactly as in the simpler case 
studied in \cite{MMM1}. Here if $E \ge (8/5)|\om|^2$, the points attract while if  $E < 
(8/5)|\om|^2$, they scatter.         

When the landmark points attract, this simple system forms higher order singularities. If we take 
coordinates so that $\de P$ is on the $x_1$-axis and $\de m$ in the $(x_1,x_2)$-plane, then for 
$\rh$ very small, we have:  
$$ P_1 = (\rh/2,0,0),\quad P_2 = (-\rh/2,0,0),\quad m_1 = \frac{1}{\rh}(C,\om,0), \quad m_2 = -\frac{1}{\rh}(C,\om,0)$$
where $2C = \langle \de P, \de m \rangle$, hence (using the limiting values of the $k$-terms in the 
formula for energy) we get $C^2 \approx \tfrac54 E - 2\om^2 >0$. Then, as these points approach 
each other, the corresponding global vector field in $\R^3$ approaches:  
\begin{align*} 
v_i(x) &= -\frac{(K_{0,1})_{i\cdot}(x+(\rh/2,0,0))-(K_{0,1})_{i\cdot}(x-(\rh/2,0,0))}{\rh} 
\cdot\left( \begin{array}{cc} C \\ \om \end{array} \right) \\
&\approx - \p_{x_1}\left((K_{0,1})_{i1}\cdot C +  (K_{0,1})_{i2}\cdot \om \right)
\end{align*}
This vector field for $\om = 2C$ is illustrated in Figure \ref{fig:swirl}. Whereas for any column 
vector $A$, $K_{01}\cdot A$ is a vortex ring with maximum norm at the origin and maximum vorticity 
along a ring centered at the origin and lying in a plane perpendicular to $A$, its derivative $v$ 
is now zero at the origin and it has maximum vorticity there. In our case, computing the 
derivatives $Dv(0)$, we find that near the origin, the flowlines of $v$ spiral in along the 
$(x_1,x_2)$-plane and shoot out along the $x_3$-axis.      
\begin{figure}[h!]
\epsfig{file=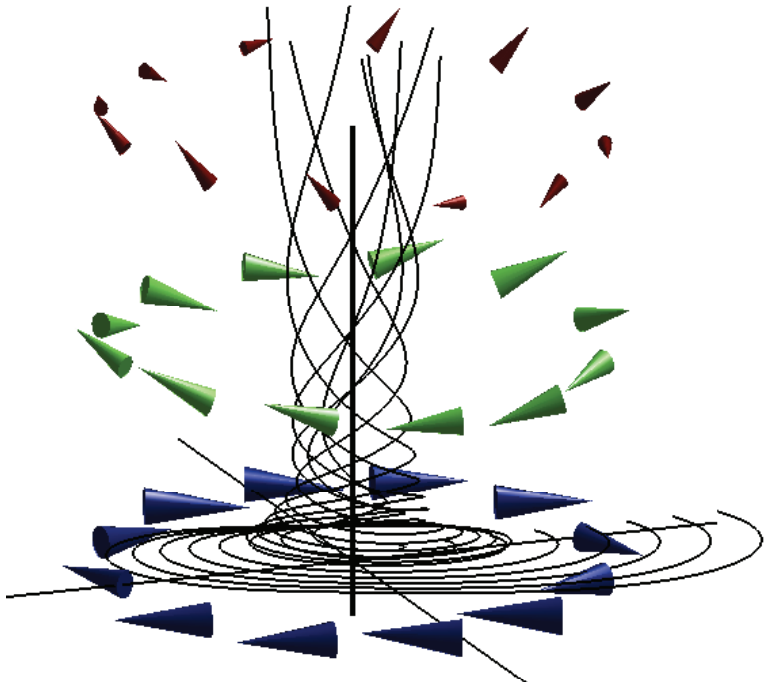,width=4in}
\caption{Streamlines and MatLab's `coneplot' to visualize the vector field given by the 
$x_1$-derivative of the kernel $K_{0,1}$ times the vector $(1,2,0)$. See text.}\label{fig:swirl} 
\end{figure}

Another case which is easy to explore is when $\overline{m}$ lies in the plane spanned by $\de P$ 
and $\de m$. The angular momentum no longer descends to a function on the $\de P, \de m$ space but 
we may numerically integrate the geodesic equations. Figure \ref{fig:mbar} shows geodesics all 
starting with the same $\de P$ and $\de m$ but with varying $\overline{m}$ fixed along the 
$y$-axis.     

\begin{figure}[h!]
\epsfig{file=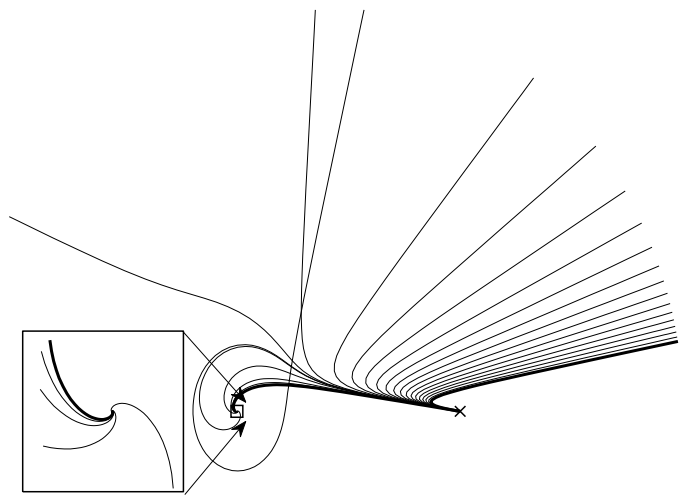,width=4in}
\caption{Geodesics in the $\de P$ plane all starting at the point marked by an {\sf X} but with 
$\overline m$ along the $y$-axis varying from 0 to 10. Here $\et=1$, the initial point is $(5,0)$ 
and the initial momentum is $(-3,.5)$. Note how the two vortons repel each other on some geodesics 
and attract on others. A blow up shows  the spiraling behavior as they collapse towards each 
other.}\label{fig:mbar}    
\end{figure}

It is extremely easy to compute landmark geodesics numerically even in much more complex situations 
and we hope that, letting $\et \rightarrow 0$, this may be a useful tool to exploring the 
instabilities of Euler's equation itself.

\end{document}